\newtheoremstyle{personal}%
{12pt}%      Space above
{12pt}%      Space below
{\slshape}%  Body font
{}%          Indent amount
{\bfseries}% Theorem head font
{.}%         Punctuation after theorem head
{.5em}%      Space after theorem head
{}%          Theorem head spec (can be left empty, meaning `normal')
\newtheoremstyle{personalB}%
{12pt}%      Space above
{12pt}%      Space below
{\slshape}%  Body font
{}%          Indent amount
{\bfseries}% Theorem head font
{.}%         Punctuation after theorem head
{\newline}%  Space after theorem head
{}%          Theorem head spec (can be left empty, meaning `normal')
\theoremstyle{personal}%
\newtheorem{thm}{Theorem}[section]
\newtheorem{lem}[thm]{Lemma}
\theoremstyle{definition}
\newtheorem{rem}{Remark}[section]
\theoremstyle{personalB}%
\numberwithin{equation}{section}
\DeclareMathOperator*{\toup}{\longrightarrow} 
\newcommand{\N}{\mathds{N}}
\newcommand{\M}{\mathds{M}}
\newcommand{\Z}{\mathds{Z}}
\newcommand{\R}{\mathds{R}}
\newcommand{\crit}{\mathrm{crit}}
\newcommand{\ttau}{{\bm{\tau}}}
\newcommand{\ev}{\mathrm{ev}}
\newcommand{\dist}{\mathrm{dist}}
\newcommand{\injrad}{\mathrm{injrad}}
\newcommand{\id}{\mathrm{id}}
\newcommand{\ind}{\mathrm{ind}}
\newcommand{\nul}{\mathrm{nul}}
\newcommand{\diff}{\mathrm{d}}
\newcommand{\fix}{\mathrm{fix}}
\newcommand{\Hess}{\mathrm{Hess}}
\newcommand{\Tan}{\mathrm{T}}
\newcommand{\orb}{\mathrm{orb}}
\begin{document}

\title[Isometry-invariant geodesics and the fundamental group, II]{Isometry-invariant geodesics\\ and the fundamental group, II}

\author{Leonardo Macarini}
\address{Universidade Federal do Rio de Janeiro, Instituto de Matem\'atica\newline\indent Cidade Universit\'aria, CEP 21941-909, Rio de Janeiro, Brazil}
\email{leonardo@impa.br}

\author{Marco Mazzucchelli}
\address{CNRS and \'Ecole Normale Sup\'erieure de Lyon, UMPA\newline\indent  69364 Lyon Cedex 07, France}
\email{marco.mazzucchelli@ens-lyon.fr}

\subjclass[2000]{58E10, 53C22}
\keywords{Isometry-invariant geodesics, closed geodesics, Morse theory}

\date{April 22, 2015. \emph{Revised}: February 15, 2016.}

\begin{abstract}
We show that on a closed Riemannian manifold with fundamental group isomorphic to $\Z$, other than the circle, every isometry that is homotopic to the identity possesses infinitely many invariant geodesics. This completes a recent result in \cite{Mazzucchelli:Isometry_invariant_geodesics_and_the_fundamental_group} of the second author.

\tableofcontents
\end{abstract}

\maketitle

\section{Introduction}\label{s:Introduction}

In this paper, we complete the study began in \cite{Mazzucchelli:Isometry_invariant_geodesics_and_the_fundamental_group} of isometry-invariant geodesics on closed Riemannian manifolds with infinite abelian fundamental group. Isometry-invariant geodesics play the role of closed geodesics in a Riemannian setting with symmetry. Given an isometry $I$ of a closed connected Riemannian manifold $(M,g)$, a geodesic $\gamma:\R\looparrowright M$ is called $I$-invariant if $I(\gamma(t))=\gamma(t+\tau)$ for some positive $\tau>0$ and for all $t\in\R$. Intuitively, these curves should be the closed geodesics of the possibly singular quotient $M/I$. 

The study of isometry-invariant geodesics was initiated by Grove \cite{Grove:Condition_C_for_the_energy_integral_on_certain_path_spaces_and_applications_to_the_theory_of_geodesics, Grove:Isometry_invariant_geodesics} in the 1970s. The problem admits a variational description, which generalizes the one of closed geodesics: isometry-invariant geodesics are the critical points of an energy function defined on a space of invariant paths. If the considered isometry is homotopic to the identity, this space of invariant paths is homotopy equivalent to the free loop space. This may induce someone to naively conjecture that all multiplicity results for closed geodesics remain true for isometry-invariant geodesics, provided the isometry is homotopic to the identity. A quite sophisticated argument due to Grove and Tanaka \cite{Grove_Tanaka:On_the_number_of_invariant_closed_geodesics_BULLETTIN, Grove_Tanaka:On_the_number_of_invariant_closed_geodesics_ACTA, Grove:The_isometry_invariant_geodesics_problem_Closed_and_open} shows that this is the case for Gromoll and Meyer's theorem: every closed Riemannian manifold with non-monogenic rational cohomology admits infinitely many isometry-invariant geodesics. This result is proved by cleverly exploiting the richness of the homology of the free loop space. However, there are multiplicity results, such as the existence of infinitely many closed geodesics on Riemannian 2-spheres \cite{Bangert:On_the_existence_of_closed_geodesics_on_two_spheres, Franks:Geodesics_on_S2_and_periodic_points_of_annulus_homeomorphisms, Hingston:On_the_growth_of_the_number_of_closed_geodesics_on_the_two_sphere}, whose proofs need arguments that go beyond the abundance of the homology of the free loop space. These results may fail for isometry-invariant geodesics: for instance, a non-trivial rotation on a round $2$-sphere has only one invariant geodesic.

A famous theorem of Bangert and Hingston implies that closed Riemannian manifolds with infinite abelian fundamental group always possess infinitely many closed geodesics. As in the case of the 2-sphere, the proof of this result combines general minimax techniques from Morse theory with the investigation of homotopy groups of the free loop space and more ad-hoc arguments. In a previous paper of the second author \cite{Mazzucchelli:Isometry_invariant_geodesics_and_the_fundamental_group}, Bangert and Hingston's theorem was extended to the isometry-invariant setting under the extra assumption that the infinite abelian fundamental group was not cyclic. The main result of this paper completes the extension of the result.

\begin{thm}\label{t:pi_1_Z}
Let $(M,g)$ be a closed connected Riemannian manifold different from the circle and with fundamental group isomorphic to $\Z$. Every isometry of $(M,g)$ that is homotopic to the identity possesses infinitely many invariant geodesics.
\end{thm}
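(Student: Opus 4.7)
My plan is to argue by contradiction via Morse theory on the Hilbert manifold of $I$-invariant paths
\begin{equation*}
\Lambda_I M=\bigl\{\gamma\in\Wloc(\R,M) : \gamma(t+1)=I(\gamma(t))\text{ for all }t\in\R\bigr\},
\end{equation*}
on which the energy functional $E$ satisfies the Palais--Smale condition, with critical points the (reparametrised) $I$-invariant geodesics (Grove). Because $I\simeq\id_M$ and $\pi_1(M)\cong\Z$ is abelian, $\Lambda_I M$ is homotopy equivalent to the free loop space $\Lambda M$ and decomposes into connected components $\Lambda_I M=\bigsqcup_{k\in\Z}\Lambda_I^k M$, one for each free homotopy class. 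A convenient auxiliary remark is that any lift $\tilde I$ of $I$ to the universal cover $\widetilde M$ commutes with the generator $T$ of the deck group (since $I_\ast=\id$ on $\pi_1$); under this identification, paths in $\Lambda_I^k M$ lift to paths $\tilde\gamma$ on $\widetilde M$ satisfying $\tilde\gamma(t+1)=T^k\tilde I(\tilde\gamma(t))$, which will be useful when iterating.

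Following Bangert--Hingston, I would assume for contradiction that the set of geometrically distinct $I$-invariant geodesics is finite, and construct for each sufficiently large $k\in\N$ a non-trivial homology class $\alpha_k\in H_{d_k}(\Lambda_I^k M)$ whose degree $d_k$ grows at least linearly in $k$ and whose minimax critical value
\begin{equation*}
c_k=\inf_{A\in\alpha_k}\max_{\gamma\in A}E(\gamma)
\end{equation*}
admits a quadratic upper bound $c_k\le Ck^2$. The classes would be produced by an iterated homological subordination procedure, starting from a fixed positive-degree class on $\Lambda_I^1 M$ and propagating it along a ``shift-by-one'' map $\Lambda_I^k M\to\Lambda_I^{k+1} M$ that in the universal cover corresponds to replacing the constraint $\tilde\gamma(t+1)=T^k\tilde I(\tilde\gamma(t))$ by $\tilde\gamma(t+1)=T^{k+1}\tilde I(\tilde\gamma(t))$. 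Once such a sequence $\{\alpha_k\}$ is in hand, Bangert's precise iteration formula for the indices and nullities of the iterates of any closed geodesic contained in the finite list rules out the possibility that the $c_k$ are all realised by that finite family, producing the contradiction. This is the same high-level strategy as in \cite{Mazzucchelli:Isometry_invariant_geodesics_and_the_fundamental_group}.

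The main obstacle, and the reason the cyclic case is substantially harder than the non-cyclic one treated there, is the construction of the classes $\alpha_k$ themselves. In the non-cyclic case the lattice of free homotopy classes has rank $\ge 2$, so one of the extra $\Z$-directions in $\pi_1$ supplies the ``second iteration axis'' needed for Hingston's subordination trick. Here $\pi_1(M)\cong\Z$ provides only a one-parameter family of components and no such extra axis exists. The natural candidate $\gamma\mapsto\gamma(k\,\cdot\,)$ fails to land in $\Lambda_I^k M$ (it produces $I^k$-invariance, not $I$-invariance), so one must instead extract the missing direction from the commuting pair $(\tilde I,T)$ on $\widetilde M$, by mixing the isometry with the deck translation to build a shift map with the correct homotopical complexity. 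Verifying that this mixed iteration map both induces non-trivial subordinated classes on the component level and respects the quadratic energy bound required by Bangert's iteration inequality is, to my mind, where the technical core of the argument must lie.
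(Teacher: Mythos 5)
Your proposal is a strategy outline rather than a proof, and its two load-bearing steps are precisely the ones that are missing or would fail. First, the construction of the subordinated classes $\alpha_k\in H_{d_k}$ with degree growing linearly in $k$ and minimax values $c_k\le Ck^2$ is left entirely open (you say yourself this is ``where the technical core must lie''), and there is no reason such growing-degree classes exist on the components of $\Lambda(M;I)$ when $\pi_1(M)\cong\Z$; the growing-degree subordination picture belongs to a different circle of arguments than the one that handles the cyclic case. What is actually available, and what the paper uses as a black box, is Bangert and Hingston's construction of non-trivial classes $h_n\in\pi_{d-1}(D_{kn})$ in a \emph{fixed} degree $d-1$ (with $d\geq 2$ minimal such that $\pi_d(M)\neq 0$) killed by the evaluation map, transported to $\Lambda(M;I)$ via the homotopy equivalence with $\Lambda(M;\id)$. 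Second, your proposed contradiction mechanism invokes ``Bangert's precise iteration formula for the indices and nullities'' of the members of the finite list. No such formula exists in the isometry-invariant setting: the relevant critical circles are $\orb(\gamma^{mp+1})$, which are not integer iterates of a fixed closed curve (the period $p$ may be irrational and $I$ may have infinite order), and the only available structural results are Grove--Tanaka's dichotomy (Lemma~\ref{l:mean_index}) and the splitting of Lemma~\ref{l:Grove_Tanaka}. This is exactly why a transplantation of the Bangert--Hingston closed-geodesic machinery does not go through and why, as the paper stresses, crucially new ingredients are needed.

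The paper's actual route avoids both problems: it feeds the fixed-degree Bangert--Hingston classes into Theorem~\ref{t:main}, a minimax argument over infinitely many distinct components, whose proof rests on the new Lemma~\ref{l:iterated_mountain_passes} asserting that a sufficiently iterated isolated invariant geodesic is never a $d$-dimensional mountain pass. That lemma replaces index iteration altogether; in the zero mean index case it is proved through the Grove--Tanaka submanifolds $\Omega_i$, a finite-dimensional broken-geodesic reduction, and Bangert--Klingenberg type homotopies, while the positive mean index case follows from the index growing past $d$. Your correct observations (the homotopy equivalence $\Lambda(M;I)\simeq\Lambda(M;\id)$, the decomposition into components indexed by $\Z$, the failure of the naive $k$-fold iteration map to preserve $I$-invariance) set the stage, but without a construction of your classes and without a usable iteration theory the proposed argument does not close.
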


The proof of this result is not a mere generalization of Bangert and Hingston's one, but requires crucial new ingredients. Actually, Theorem~\ref{t:pi_1_Z} will be a corollary of a more general result, Theorem~\ref{t:main}. This latter statement asserts the existence of infinitely many isometry-invariant geodesics, provided the space of invariant curves has infinitely many connected components with enough non-trivial homotopy in a fixed positive degree. The proof will be based on  the technical Lemma~\ref{l:iterated_mountain_passes} asserting that, for any positive degree $d$, a sufficiently iterated periodic invariant geodesic is not a $d$-dimensional mountain pass. We believe that this lemma has independent interest, and for instance a version of it for the Lagrangian free-period action functional (see \cite{Abbondandolo:Lectures_on_the_free_period_Lagrangian_action_functional}) might find application to the multiplicity problem for periodic orbits of Tonelli Lagrangian systems with prescribed energy.

\subsection{Organization of the paper}
In Section~\ref{s:Preliminaries}, we recall the variational setting of the energy function for isometry-invariant geodesics, and we quote the results of Grove and Tanaka that describe the Morse theoretic properties of iterated periodic isometry-invariant geodesics. In Section~\ref{s:broken_geodesics} we introduce a finite dimensional reduction of the space of invariant curves by means of Morse's broken geodesics approximations, and we focus on the  properties of this reduction that will be needed later on. Section~\ref{s:main_lemma}, which is the core of the paper, is devoted to the proof of the above mentioned main technical lemma. Finally, in Section~\ref{s:main_theorem}, we prove the main result of the paper, Theorem~\ref{t:main}, and carry over the proof of Theorem~\ref{t:pi_1_Z} as a corollary of it.

\subsection{Acknowledgements} 
This project began during a research stay of the authors at IMPA (Rio de Janeiro, Brazil). The authors thank the Brazilian-French Network in Mathematics, as well as Henrique Bursztyn, for providing financial support, and IMPA for its  stimulating working environment. The second author also acknowledges support by the ANR projects WKBHJ (ANR-12-BS01-0020) and COSPIN (ANR-13-JS01-0008-01).

\section{Preliminaries}
\label{s:Preliminaries}

\subsection{The space of isometry-invariant curves}
Let $I$ be an isometry of a closed connected Riemannian manifold $(M,g)$. We consider the following space of curves on which $I$ acts as a translation of time $\tau>0$
\[
\Lambda^\tau(M;I):=\big\{\zeta\in  W^{1,2}_{\mathrm{loc}}(\R;M)\ \big|\ I(\zeta(t))=\zeta(t+\tau)\ \forall t\in\R \big\}.
\]
We denote by $E^\tau:\Lambda^\tau(M;I)\to\R$ the energy function
\[
E^\tau(\zeta)=\frac1\tau \int_0^\tau g_{\zeta(t)}(\dot\zeta(t),\dot\zeta(t))\,\diff t,
\]
whose critical points are precisely those smooth geodesics of $(M,g)$ that are contained in $\Lambda^\tau(M;I)$, that is, the $I$-invariant geodesics of $(M,g)$ with a suitable parametrization. It is well known that, with the usual $W^{1,2}$ Riemannian metric on path spaces, $\Lambda^\tau(M;I)$ is a complete Hilbert-Riemannian manifold, and $E^\tau$ satisfies the Palais-Smale condition. We refer the reader to \cite{Grove:Condition_C_for_the_energy_integral_on_certain_path_spaces_and_applications_to_the_theory_of_geodesics} for the background on this functional setting. Since any reparametrization with constant speed of a geodesic is still a geodesic, the particular value $\tau$ is not conceptually relevant here. We will often set $\tau=1$ and, in such case, omit $\tau$ from the notation.

The real line $\R$ acts on $\Lambda(M;I)$ by translation, i.e.\ $t\cdot \zeta=\zeta(t+\cdot)$ for all $t\in\R$ and $\zeta\in\Lambda(M;I)$. The energy function $E$ is invariant by this action, and therefore its critical points come in orbits. An open $I$-invariant geodesic corresponds to a unique critical orbit of $E$, whereas a $p$-periodic $I$-invariant geodesic $\gamma$ with positive energy corresponds to a countable sequence of embedded critical circles $\{\orb(\gamma^{mp+1})\ |\ m\in\N\}$, where $\gamma^{mp+1}(t)=\gamma((mp+1)t)$. Notice that there may also be uninteresting critical points of $E$: the fixed points of $I$, which come in totally geodesic submanifolds of $(M,g)$. As critical points of $E$, these stationary curves have zero critical value and, by compactness, are contained in finitely many connected components of $\Lambda(M;I)$. In this paper, as usual, we will only be interested in $I$-invariant geodesics with positive energy, and we will consider two critical orbits $\orb(\gamma_1)$ and $\orb(\gamma_2)$ of the energy $E$ as distinct $I$-invariant geodesics if and only if $\gamma_1(\R)\neq\gamma_2(\R)$, that is, if and only if they define different immersed submanifolds of $(M,g)$.

\subsection{Periodic isometry-invariant geodesics}

By a classical theorem of Grove \cite[Thm.~2.4]{Grove:Isometry_invariant_geodesics}, the closure of the critical orbit of an open $I$-invariant geodesic contains uncountably many other critical orbits of $E$. Therefore, for the study of multiplicity results, we will always assume that all $I$-invariant geodesics are periodic curves.

Consider the sequence of critical circles $\{\orb(\gamma^{mp+1})\ |\ m\in\N\}$ of the energy function $E$ associated to an $I$-invariant geodesic of minimal period $p>0$. Notice that $p\geq 1$. Indeed, if $p\in(0,1)$, there exists $\tau\in(0,p]$ such that $I(\gamma(t))=\gamma(t+\tau)$ for all $t\in\R$, but this is not possible since the critical circle of the curve $\gamma^\tau(t):=\gamma(\tau t)$ does not belong to $\{\orb(\gamma^{mp+1})\ |\ m\in\N\}$. In the following, we wish to summarize the Morse-theoretic properties of the sequence $\{\orb(\gamma^{mp+1})\ |\ m\in\N\}$.
We denote by $\ind(E,\gamma)$ and $\nul(E,\gamma)$ the Morse index and the nullity of $E$ at $\gamma$. The following lemma was proved by Grove and Tanaka  \cite[Lemma~2.8]{Grove_Tanaka:On_the_number_of_invariant_closed_geodesics_ACTA} under the assumption that the isometry $I$ has finite order, and extended to the general case by Tanaka \cite[Lemma~1.8]{Tanaka:On_the_existence_of_infinitely_many_isometry_invariant_geodesics}.

\begin{lem}[Grove-Tanaka]\label{l:mean_index}
Either $\ind(E,\gamma^{mp+1})=0$ for all non-negative integers $m$, or  $\ind(E,\gamma^{mp+1})\to\infty$ as $m\to\infty$.
\hfill\qed
\end{lem}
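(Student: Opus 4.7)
The plan is to establish an equivariant analogue of Bott's iteration formula for the Morse index of $E$ at $\gamma^{mp+1}$ and then to derive the dichotomy from the resulting non-negative mean index.

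\emph{Step 1 (twisted Poincar\'e map).} Since $\gamma$ has minimal period $p$ and satisfies $I(\gamma(t))=\gamma(t+\tau)$, the linearized geodesic flow along $\gamma$ over one period, composed with a suitable power of $dI$ acting on the normal bundle, gives rise to a symplectic automorphism $P$ of the normal symplectic space $(\dot\gamma(0))^\perp/\R\dot\gamma(0)$. This twisted monodromy simultaneously encodes the ordinary closed-geodesic Poincar\'e map of $\gamma$ (inside one period $p$) and the linearized action of $I$ on normal Jacobi fields.

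\emph{Step 2 (iteration formula).} The second variation of $E$ at $\gamma^{mp+1}$ yields a quadratic form on a space of vector fields along $\gamma^{mp+1}$, subject to the $I$-equivariance boundary condition, which after normalizing the parameter becomes a twisted cyclic condition of period $N:=mp+1$. Decomposing this quadratic form under the resulting $\Z/N$-symmetry and applying the standard Morse formula on each eigenspace produces a Bott-type identity
\[
\ind(E,\gamma^{mp+1})=\sum_{z^{N}=\omega_0}\Lambda(z),
\]
where $\Lambda\colon S^1\to\Z_{\geq 0}$ is a non-negative integer-valued step function determined by the spectrum of $P$, and $\omega_0\in S^1$ is a phase factor coming from the $I$-twisting.

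\emph{Step 3 (dichotomy via mean index).} The right-hand side is a Riemann-type sum, so dividing by $N$ and letting $m\to\infty$ one gets that the limit
\[
\avind(\gamma):=\lim_{m\to\infty}\frac{\ind(E,\gamma^{mp+1})}{mp+1}=\frac{1}{2\pi}\int_{S^1}\Lambda(z)\,\diff z
\]
exists and is non-negative. If $\avind(\gamma)>0$ then $\ind(E,\gamma^{mp+1})\sim(mp+1)\,\avind(\gamma)\to\infty$, which is the first alternative. If $\avind(\gamma)=0$ then, since $\Lambda$ is a non-negative step function whose integral vanishes, $\Lambda\equiv 0$ on $S^1$, and the iteration formula forces $\ind(E,\gamma^{mp+1})=0$ for every $m\geq 0$, which is the second alternative.

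\emph{Main obstacle.} The delicate point is carrying out Steps 1--2 without assuming $I$ has finite order: in general the eigenvalues of $dI$ at $\gamma(0)$ need not be roots of unity, so the twisting phase $\omega_0$ may be irrational, and one cannot reduce the problem to an ordinary closed-geodesic situation by passing to a finite cover. The careful derivation of the formula in this generality, handling non-semisimple blocks of $P$ and the interplay between the $I$-twist and the cyclic decomposition, is precisely what Grove--Tanaka \cite{Grove_Tanaka:On_the_number_of_invariant_closed_geodesics_ACTA} and Tanaka \cite{Tanaka:On_the_existence_of_infinitely_many_isometry_invariant_geodesics} establish, and which the proof must either reproduce or invoke directly.
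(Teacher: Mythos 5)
You should first note what the paper itself does with this statement: it is quoted without proof (hence the square at the end of the statement), the finite-order case being attributed to Grove--Tanaka \cite{Grove_Tanaka:On_the_number_of_invariant_closed_geodesics_ACTA} and the general case to Tanaka \cite{Tanaka:On_the_existence_of_infinitely_many_isometry_invariant_geodesics}. Simply invoking those references would therefore have matched the paper. Your proposal instead sketches a Bott-type iteration argument and then, in the ``main obstacle'' paragraph, defers to the same references for the part that actually carries the difficulty; the problem is that the intermediate steps you write down are not correct as stated in the generality this lemma requires.

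The gap is in Step 2. The minimal period $p$ need not be rational (the paper's proof of Lemma~\ref{l:Grove_Tanaka} treats the rational and irrational cases separately), so $N=mp+1$ is in general not an integer: the equation $z^{N}=\omega_0$ and the asserted finite sum over its solutions are not even defined. More structurally, the symmetry you propose to decompose under does not exist. Translation by $p$ does preserve the twisted index form, since $\gamma$ is $p$-periodic and the boundary condition relating $\xi(t+mp+1)$ to the image of $\xi(t)$ under the differential of $I$ is translation-invariant; but $mp+1$ is not a multiple of $p$, and when $p$ is irrational the group $p\Z+(mp+1)\Z$ is dense in $\R$, so no finite cyclic group acts and there is no eigenspace splitting yielding a Bott identity with finitely many characters. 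Likewise, when $I$ has infinite order one cannot pass to a finite cover to reduce to the closed-geodesic case, and even for rational $p$ the translation operator is of finite order only modulo powers of the differential of $I$, so at best one gets a spectral decomposition with respect to a unitary operator, not the finite root-of-unity sum you display. Step 3 (existence and non-negativity of $\avind(\gamma)$, and the lower semicontinuity of $\Lambda$ needed to pass from $\avind(\gamma)=0$ to $\ind(E,\gamma^{mp+1})=0$ for all $m$) is sound in the classical closed-geodesic setting, but it rests entirely on the unproved formula. So as a self-contained argument the proposal fails exactly at the point you flag; the honest alternatives are to cite Grove--Tanaka and Tanaka outright, as the paper does, or to reproduce their actual comparison arguments, which are not the naive cyclic decomposition described here.
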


Notice that, from the point of view of Morse theory, the critical point $\gamma^{mp+1}$ of $E$ is equivalent to the critical point $\gamma$ of $E^{mp+1}$. Indeed, the diffeomorphism 
\begin{align*}
 \psi^{mp+1}: \Lambda^{mp+1}(M;I)\toup^{\cong}\Lambda(M;I), \qquad
 \psi^{mp+1}(\zeta)=\zeta^{mp+1},
\end{align*}
satisfies $E\circ\psi^{mp+1}=(mp+1)^2 E^{mp+1}$, and in particular 
\[\ind(E, \gamma^{mp+1})=\ind(E^{mp+1},\gamma).\] 
The following lemma follows from the arguments in \cite[Sections~2-3]{Grove_Tanaka:On_the_number_of_invariant_closed_geodesics_ACTA} and \cite[Sections~2-3]{Tanaka:On_the_existence_of_infinitely_many_isometry_invariant_geodesics}. We provide its proof for the reader's convenience.

\begin{lem}[Grove-Tanaka]\label{l:Grove_Tanaka}
There exist finitely many Hilbert manifolds $\Omega_1,...,\Omega_n$, positive real numbers $q_1,...,q_n$ that are multiples of $p$, real numbers  $q_1',...,q_n'$ satisfying $q_i'\in[0,q_i)$ for all $i$, and a partition $\M_1\cup...\cup\M_n$ of the set $\N\setminus\{1,2,...,n_0\}$, for some $n_0\in\N$,  such that for all $m\in\M_i$ the following properties hold:
\begin{itemize}
\item[(i)]  $\Omega_{i}$ is a complete Hilbert submanifold of both $\Lambda^{q_i}(M;\id)$ and $\Lambda^{mp+1}(M;I)$, and is invariant by the gradient flow of $E^{mp+1}$;
\item[(ii)] $E^{q_i}|_{\Omega_i}=E^{mp+1}|_{\Omega_i}$;
\item[(iii)] $\nul(E^{mp+1},\gamma)=\nul(E^{mp+1}|_{\Omega_i},\gamma)$;
\item[(iv)] $mp+1\equiv q_i'\quad \mathrm{mod}\ q_i$.
\end{itemize}
\end{lem}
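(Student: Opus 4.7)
The plan is to reduce properties (i)--(iv) to a finite-dimensional linear-algebra analysis on the space $\mathcal{J}$ of Jacobi fields along $\gamma$, and then realize the resulting arithmetic-progression classes as fixed-point sets of two commuting isometric symmetries of $\Lambda^{mp+1}(M;I)$.

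First, I would introduce two commuting linear operators on the finite-dimensional space $\mathcal{J}$: the shift $S\xi(t)=\xi(t+p)$, which is well-defined because $\gamma$ has period $p$, and the ``$I$-twist shift'' $T\xi(t)=(dI^{-1})_{\gamma(t+1)}\xi(t+1)$, which preserves $\mathcal{J}$ because $I$ is an isometry sending $\gamma$ to $\gamma(\cdot+1)$. The two operators commute. A direct computation shows that a Jacobi field $\xi$ lies in the null space of $E^{mp+1}$ at $\gamma$ (i.e.\ satisfies $dI\,\xi(t)=\xi(t+mp+1)$) if and only if $TS^m\xi=\xi$. I would then simultaneously put $(S,T)$ in Jordan form; on a generalized eigenspace $V_\alpha$ with eigenvalues $(\mu_\alpha,\lambda_\alpha)$, the condition $TS^m=\mathrm{id}$ forces $\lambda_\alpha\mu_\alpha^{m}=1$. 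Since $S$ and $T$ are essentially unitary (coming from isometries), all $\mu_\alpha$ lie on the unit circle, and the set of $m$ for which $V_\alpha$ contributes is either finite (when $\mu_\alpha$ is not a root of unity) or an arithmetic progression modulo $\mathrm{ord}(\mu_\alpha)$. Letting $\ell$ be the least common multiple of the orders of the root-of-unity $\mu_\alpha$, and $n_0$ large enough to swallow all sporadic contributions, one obtains: for $m>n_0$, $\ker(TS^m-\mathrm{id})$ depends only on $m\bmod\ell$ and is automatically contained in $\ker(S^\ell-\mathrm{id})$.

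Setting $q_i=p\ell$ and indexing classes by residues $\mathcal{M}_i=\{m>n_0:m\equiv m_i\bmod\ell\}$, I would then define $q_i'\in[0,q_i)$ as the residue of $mp+1$ modulo $q_i$ for any $m\in\mathcal{M}_i$ (so (iv) is built in), and put
\[
\Omega_i:=\bigl\{\zeta\in\Lambda^{q_i}(M;\mathrm{id})\ \big|\ I(\zeta(t))=\zeta(t+q_i')\ \forall t\bigr\}.
\]
This is a Hilbert submanifold of $\Lambda^{q_i}(M;\mathrm{id})$. For (i), observe that the two maps $\zeta\mapsto\zeta(\cdot+q_i)$ and $\zeta\mapsto I^{-1}\!\circ\zeta(\cdot+q_i')$ preserve $\Lambda^{mp+1}(M;I)$, its $W^{1,2}$ metric, and $E^{mp+1}$ (the first by translation invariance, the second since $I$ is an isometry); $\Omega_i$ is their common fixed-point set, so the negative gradient flow of $E^{mp+1}$ preserves it, and iteration of the identity $I\zeta(t)=\zeta(t+q_i')$ together with $q_i$-periodicity gives $I\zeta(t)=\zeta(t+mp+1)$, whence $\Omega_i\subset\Lambda^{mp+1}(M;I)$. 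For (ii), the fact that $|\dot\zeta|^2$ is $\gcd(q_i,q_i')$-periodic on $\Omega_i$ (because $I$ is an isometry) lets me compute $\int_0^{mp+1}|\dot\zeta|^2\,dt=\tfrac{mp+1}{q_i}\int_0^{q_i}|\dot\zeta|^2\,dt$, giving $E^{mp+1}|_{\Omega_i}=E^{q_i}|_{\Omega_i}$.

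Finally, for (iii) I would translate the tangent space $T_\gamma\Omega_i$ into the linear-algebra language of Step~1: it consists of sections with $S^\ell\xi=\xi$ and $\xi(t+q_i')=dI\,\xi(t)$, the latter being equivalent (since $q_i'-1$ is a multiple of $p$) to $TS^{m-\ell k}\xi=\xi$, which under $S^\ell\xi=\xi$ reduces to $TS^m\xi=\xi$. Thus $T_\gamma\Omega_i\cap\mathcal{J}=\ker(S^\ell-\mathrm{id})\cap\ker(TS^m-\mathrm{id})$, and by the crucial inclusion $\ker(TS^m-\mathrm{id})\subset\ker(S^\ell-\mathrm{id})$ established in Step~1, this equals $\ker(TS^m-\mathrm{id})$, proving (iii). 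The main obstacle is the careful control of Jordan blocks in Step~1: one must verify that non-semisimple contributions of non-root-of-unity $\mu_\alpha$ to $\ker(TS^m-\mathrm{id})$ occur only for finitely many $m$, so that they can be absorbed into $n_0$.
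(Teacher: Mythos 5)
Your reductions of (i), (ii) and (iv) are essentially sound: realizing $\Omega_i$ as the common fixed-point set of the two isometries $\zeta\mapsto\zeta(\cdot+q_i)$ and $\zeta\mapsto I^{-1}\circ\zeta(\cdot+q_i')$ of $\Lambda^{mp+1}(M;I)$ does give gradient-flow invariance, and it also lets you identify $\nul(E^{mp+1}|_{\Omega_i},\gamma)$ with $\dim\bigl(\ker\Hess E^{mp+1}(\gamma)\cap T_\gamma\Omega_i\bigr)$. The genuine gap is exactly at the step you yourself flag as ``the main obstacle'', i.e.\ the inclusion $\ker(TS^m-\id)\subset\ker(S^\ell-\id)$ for all large $m$, and it cannot be closed by the Jordan-form bookkeeping you sketch. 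First, $S$ and $T$ are not ``essentially unitary'': they are (twisted) transition maps of the Jacobi equation, hence symplectic, and their eigenvalues need not lie on the unit circle (this particular slip is repairable, since $\lambda_\alpha\mu_\alpha^m=1$ still selects at most one $m$ when $\mu_\alpha$ is not a root of unity). Second, and fatally for the proposed route, the inclusion is not a formal consequence of commutativity: for commuting unipotents $\id+M$, $\id+N$ with $M^2=N^2=0$ and $MN\neq0$ (model: multiplication by $1+y$ and $1+x$ on the four-dimensional algebra spanned by $1,x,y,xy$ with $x^2=y^2=0$), the kernel of $(\id+M)(\id+N)^m-\id$ contains the vector $y-mx\notin\ker N$ for \emph{every} $m$. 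So non-semisimple contributions are not automatically confined to finitely many $m$, no choice of $n_0$ absorbs them, and excluding this behaviour for the actual operators attached to an isometry-invariant geodesic requires genuine geometric input about $I$ and $\gamma$ --- which is precisely the content of the Grove--Tanaka/Tanaka analysis.

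This is also where your $\Omega_i$ diverges from the one the lemma really uses. The paper's proof distinguishes the cases $p$ rational and $p$ irrational and quotes the hard nullity statements from the literature (Tanaka's Lemmas~2.1 and~3.2, Grove--Tanaka's Lemma~2.9 and Proposition~3.5). In the rational case the resulting $\Omega_i=\Lambda^{\tau_i}(\fix(I^{q_i});I^{r_i})$ is a space of curves lying inside the totally geodesic fixed-point submanifold $\fix(I^{q_i})\subset M$, not in all of $M$: forcing the null Jacobi fields of high iterates to be tangent to such a fixed-point set is the essential mechanism that tames the possibly infinite order of $I$ before any finite arithmetic (your residues mod $\ell$) can start, and nothing in your construction produces it. Your definition of $\Omega_i$ matches in spirit the paper's irrational-case space $\Lambda^{mp+1}(M;I)\cap\Lambda^{q_i}(M;\id)$, but even there property (iii) is imported from Tanaka's Lemma~3.2 rather than derived by linear algebra. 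As it stands, the proposal proves (i), (ii), (iv) but not (iii), and (iii) is the heart of the lemma.
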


\begin{rem}
Lemma~\ref{l:Grove_Tanaka} was stated in the previous paper of the second author \cite[Lemma~2.3]{Mazzucchelli:Isometry_invariant_geodesics_and_the_fundamental_group} with two unfortunate typos: the numbers $q_1,...,q_n$ were said to be integers instead of real numbers, and the manifolds $\Omega_i$ were said to be contained in $\Lambda^{q_i}(M;I)$ instead of $\Lambda^{q_i}(M;\id)$. Both typos were rather obvious from the context, since the period $p$ is not necessarily rational, and the lemma was employed correctly in the paper.
\end{rem}

\begin{rem}
If the isometry $I$ is the identity, that is, if we are in the closed geodesics setting, Lemma~\ref{l:Grove_Tanaka} reduces to a classical result due to  Gromoll and Meyer  \cite{Gromoll_Meyer:Periodic_geodesics_on_compact_Riemannian_manifolds}: in this case we have $p=1$, each $q_i$ is a positive integer dividing $m-1$ for all the numbers $m\in\M_i$, the numbers  $q_i'$ are all equal to zero, and the manifolds $\Omega_i$ are simply the free loop spaces $\Lambda^{q_i}(M;\id)$.
\end{rem}

\begin{proof}[Proof of Lemma~\ref{l:Grove_Tanaka}]
We will treat the cases in which the period $p$ is rational and irrational separately. We begin with the rational case: $p=a/b$ for some relatively prime integers $a,b\in\N$. We recall that the fixed point set of an isometry of a closed Riemannian manifold is a disjoint union of finitely many closed totally geodesic submanifolds, see~\cite[page~59]{Kobayashi:Transformation_groups_in_differential_geometry}. Notice that, for all multiples $s$ of the numerator $a$ and for all $m\in\N$, the $I$-invariant geodesic $\gamma$ belongs to the Hilbert submanifold $\Lambda^{mp+1}(\fix(I^s);I)\subset\Lambda^{mp+1}(M;I)$. By \cite[Lemma~2.1]{Tanaka:On_the_existence_of_infinitely_many_isometry_invariant_geodesics}, there is such $s\in\N$ and some $n_0\in\N$ such that 
\begin{align*}
\nul(E^{mp+1},\gamma)=\nul(E^{mp+1}|_{\Lambda^{mp+1}(\fix(I^s);I)},\gamma),\qquad
\forall m\in\N\mbox{ with }m\geq n_0. 
\end{align*}
Let $\nabla E^{mp+1}$ denote the gradient of the energy function $E^{mp+1}:\Lambda^{mp+1}(M;I)\to\R$ with respect to its usual Riemannian metric. The argument in the proof of~\cite[Proposition~3.5]{Grove_Tanaka:On_the_number_of_invariant_closed_geodesics_ACTA} implies that the gradient $\nabla E^{mp+1}(\zeta)$ is tangent to the submanifold $\Lambda^{mp+1}(\fix(I^s);I)$ for all $\zeta\in \Lambda^{mp+1}(\fix(I^s);I)$. The isometry $I$ has order $s$ on $\fix(I^s)$. Therefore, from now on, we can work inside the manifold $\fix(I^s)$ and apply the arguments in \cite[Section~2]{Grove_Tanaka:On_the_number_of_invariant_closed_geodesics_ACTA}, which are valid for isometries of finite order. For any fixed $m\in\N$, consider a positive rational number $\tau$, and two positive integers $q,r\in\N$ such that
\begin{itemize}
\item $(mp+1)/\tau\in\N$,
\item $q$ is a multiple of $s$,
\item $I^r(\gamma(t))=\gamma(t+\tau)$,
\item $(mp+1)r/\tau\equiv 1$ mod $q$.
\end{itemize}
These conditions imply
\begin{gather*}
\gamma\in\Lambda^\tau(\fix(I^q);I^r)\subset\Lambda^{mp+1}(M;I)\cap\Lambda^q(M;\id),\\
E^{mp+1}|_{\Lambda^\tau(\fix(I^q);I^r)}=E^{q}|_{\Lambda^\tau(\fix(I^q);I^r)}=E^{\tau}|_{\Lambda^\tau(\fix(I^q);I^r)}. 
\end{gather*}
By \cite[Lemma~2.9]{Grove_Tanaka:On_the_number_of_invariant_closed_geodesics_ACTA}, there exist positive rational numbers $\tau_1,...,\tau_n,q_1,...,q_n$, positive integers $r_1,...,r_n$, and a partition $\M_1\cup...\cup\M_n$ of the set of positive integers $\N$ such that, for all $i=1,...,n$ and $m\in\M_i$, the four points above are verified by $\tau:=\tau_i$, $q:=q_i$, and $r:=r_i$, and we have
\begin{align*}
\nul(E^{mp+1}|_{\Lambda^{mp+1}(\fix(I^s);I)},\gamma)
=
\nul(E^{mp+1}|_{\Lambda^{\tau}(\fix(I^q);I^r)},\gamma),\qquad\forall m\in\M_i. 
\end{align*}
Another application of the argument in the proof of~\cite[Proposition~3.5]{Grove_Tanaka:On_the_number_of_invariant_closed_geodesics_ACTA} implies that the gradient $\nabla E^{mp+1}(\zeta)$ is tangent to the submanifold $\Lambda^{\tau}(\fix(I^q);I^r)$ for all $\zeta\in\Lambda^{\tau}(\fix(I^q);I^r)$. If we set 
\[\Omega_i:=\Lambda^{\tau_i}(\fix(I^{q_i});I^{r_i}),\] 
the arguments given so far prove points (i), (ii), and (iii) of the lemma. Notice that, for each $i=1,...,n$, there are finitely many positive rational numbers $q_1'',...,q_u''\in[0,q_i)$ such that, for each $m\in\M_i$, $mp+1$ is congruent to an element of $\{q_1'',...,q_u''\}$ modulo $q_i$. Therefore, point~(iv) follows by taking a suitable subpartition of $\M_1\cup...\cup\M_n$.

Assume now that the period $p$ is irrational. We follow the arguments in \cite[Section~3]{Tanaka:On_the_existence_of_infinitely_many_isometry_invariant_geodesics}. For all $m\in\N$ and for all multiples $q$ of the period $p$, the intersection 
\begin{align*}
\Lambda^{m,q}:=\Lambda^{mp+1}(M;I)\cap\Lambda^{q}(M;\id) 
\end{align*}
is a totally geodesic submanifold of $\Lambda^{mp+1}(M;I)$, and a standard computation shows that the gradient $\nabla E^{mp+1}(\zeta)$ is tangent to $\Lambda^{m,q}$ for all $\zeta\in\Lambda^{m,q}$. Fix one such $\zeta$. For each $\epsilon>0$ there exists $\delta\in[0,\epsilon]$ such that
\begin{align*}
\int_0^\delta g_{\zeta(t)}(\dot\zeta(t),\dot\zeta(t))\,\diff t<\epsilon.
\end{align*}
Since $p$ is irrational, the quotient $(mp+1)/q$ is irrational as well. Therefore, there exist $a,b\in\N$ such that $0\leq a(mp+1)-bq\leq\delta$. We infer that
\[a(mp+1) E^{mp+1}(\zeta) - bq E^q(\zeta)\in[0,\epsilon],\] and therefore
\begin{gather*}
E^{mp+1}(\zeta)
\leq
\frac{a(mp+1)}{bq} E^{mp+1}(\zeta)
\leq E^q(\zeta) + \frac{\epsilon}{bq},\\
E^q(\zeta)
\leq 
\frac{a(mp+1)}{bq} E^{mp+1}(\zeta)
\leq
\left(1+\frac{\epsilon}{bq}\right) E^{mp+1}(\zeta).
\end{gather*}
Since $\epsilon>0$ can be taken arbitrarily small, we have proved that 
\begin{align*}
E^{mp+1}|_{\Lambda^{m,q}}
=
E^{q}|_{\Lambda^{m,q}}
\end{align*}
By \cite[Lemma~3.2]{Tanaka:On_the_existence_of_infinitely_many_isometry_invariant_geodesics}, we infer that there exist positive real numbers $q_1,...,q_n$ that are multiples of $p$, and a partition $\M_1\cup...\cup\M_n$ of the set $\N\setminus\{1,2,...,n_0\}$, for some $n_0\in\N$, such that
\begin{align*}
\nul(E^{mp+1},\gamma)=\nul(E^{mp+1}|_{\Lambda^{m,q_i}},\gamma),
\qquad
\forall m\in\M_i.
\end{align*}
Notice that $\Lambda^{m,q_i}=\Lambda^{m',q_i}$ if $m\equiv n$ mod $q_i/p$. Therefore, up to replacing the partition $\M_1\cup...\cup\M_n$ with a subpartition, we can assume that $\Omega_i:=\Lambda^{m,q_i}=\Lambda^{m',q_i}$ for all $i=1,...,n$ and $m,m'\in\M_i$. This proves points (i), (ii), and (iii) of the lemma in the case where $p$ is irrational. As before, point (iv) follows if we take a further suitable subpartition of $\M_1\cup...\cup\M_n$.
\end{proof}

Let us equip $\Omega_i$ with the complete Riemannian metric pulled back from the one of the space $\Lambda^{mp+1}(M;I)$ via the inclusion, for an arbitrary $m\in\M_i$. The restricted energy function $E^{q_i}|_{\Omega_i}$ satisfies the Palais-Smale condition. Indeed, $E^{q_i}|_{\Omega_i}=E^{mp+1}|_{\Omega_i}$ and the gradient $\nabla E^{mp+1}$ is tangent to  $\Omega_i$; this implies that all Palais-Smale sequences for $E^{q_i}|_{\Omega_i}$ are Palais-Smale sequences for $E^{mp+1}$ as well, and therefore are compact.

For a given $m\in\M_i$, let $h\in\N$ be such that $mp+1=hq_i+q_i'$. Points (i) and (iv) of Lemma~\ref{l:Grove_Tanaka} imply that, for all $\zeta\in\Omega_i$, we have that
\[
I(\zeta(t))=\zeta(t+mp+1)=\zeta(t+q_i'),\qquad\forall t\in\R,
\]
and
\[
\int_0^{mp+1} g_{\zeta(t)}(\dot\zeta(t),\dot\zeta(t))\,\diff t
=
h\int_0^{q_i} g_{\zeta(t)}(\dot\zeta(t),\dot\zeta(t))\,\diff t
+
\int_0^{q_i'} g_{\zeta(t)}(\dot\zeta(t),\dot\zeta(t))\,\diff t.
\]
If $q_i'>0$, this can be rephrased by saying that $\Omega_i$ is contained in $\Lambda^{q_i'}(M;I)$ and \[(mp+1) E^{mp+1}|_{\Omega_i}=hq_i E^{q_i}|_{\Omega_i}+ q_i' E^{q_i'}|_{\Omega_i},\] which, together with Lemma~\ref{l:Grove_Tanaka}(ii), implies that
\begin{align*}
 E^{mp+1}|_{\Omega_i}= E^{q_i}|_{\Omega_i}= E^{q_i'}|_{\Omega_i},\qquad\forall m\in\M_i.
\end{align*}

\section{Broken geodesics}\label{s:broken_geodesics}

\subsection{A space of broken  geodesics}
In Section~\ref{s:main_lemma}, we will make use of the well-known property that closed sublevel sets of the energy function are homotopy equivalent to a compact manifold with boundary. This compact manifold is a subset of Morse's space of broken  geodesics, which we now recall. We refer the reader to~\cite[chap.~4]{Mazzucchelli:Critical_point_theory_for_Lagrangian_systems} and to the references therein for more background details. 

Consider a vector $\ttau=(\tau_0,\tau_1,...,\tau_k)\in\R^{k+1}$ such that $0=\tau_0<\tau_1<...<\tau_k$. We denote by $\Upsilon'\subset C([0,\tau_k];M)$ the subspace of all continuous curves $\zeta:[0,\tau_k]\to M$ such that $\zeta(0)=\zeta(\tau_k)$ and, for all $i=0,...,k-1$, $\zeta|_{[\tau_i,\tau_i+1]}$ is a geodesic of length less than the injectivity radius of $(M,g)$. The space $\Upsilon'$ is diffeomorphic to an open neighborhood of the diagonal in the $k$-fold product $M\times...\times M$ via the map $\zeta\mapsto(\zeta(\tau_1),\zeta(\tau_2),...,\zeta(\tau_{k}))$. As such, it inherits the Riemannian metric $g\oplus...\oplus g$. A curve $s\mapsto \zeta_s$ is a geodesic of $(\Upsilon',g\oplus...\oplus g)$ if and only if, for all $i=0,...,k-1$, the curve $s\mapsto\zeta_s(\tau_i)$ is a geodesic of $(M,g)$.

For some $k'\in\{0,...,k-1\}$, we introduce the subspace $\Upsilon\subset\Upsilon'$ of all $\zeta$'s such that $I(\zeta(0))=\zeta(\tau_{k'})$. It is easy to see that $\Upsilon$ is an embedded smooth submanifold of $\Upsilon'$. Indeed, if $k'\neq0$, we have that 
$\Upsilon=F^{-1}(\mathrm{graph}(I))$, where  $F:\Upsilon'\to M\times M$ is the smooth submersion given by $F(\zeta)=(\zeta(0),\zeta(\tau_{k'}))$. If $k'=0$, we have that $\Upsilon=G^{-1}(\fix(I))$, where $G:\Upsilon'\to M$ is the smooth submersion given by $G(\zeta)=\zeta(0)$, and $\fix(I)$ if the fixed point set of $I$, which is a disjoint union of finitely many closed totally geodesic submanifolds of $M$, see \cite[page~59]{Kobayashi:Transformation_groups_in_differential_geometry}. In both cases, $\Upsilon$ itself is a possibly disconnected, complete, and totally geodesic submanifold of $\Upsilon'$. We denote by $\dist_M:M\times M\to[0,\infty)$ the Riemannian distance on $M$, and we define the function $\dist_{\Upsilon}:\Upsilon\times\Upsilon\to$ by
\[
\dist_\Upsilon(\zeta_0,\zeta_1):=\max_{i=0,...,k-1} \dist_M(\zeta_0(\tau_i),\zeta_1(\tau_i)).
\]
This function is a distance on the connected components of $\Upsilon$  equivalent to the one induced by the Riemannian metric $g\oplus...\oplus g$.

\subsection{Two energy functions}
On $\Upsilon$ we will need to consider two energy functions $F^{\tau_{k}}:\Upsilon\to[0,\infty)$ and $F^{\tau_{k'}}:\Upsilon\to[0,\infty)$. The first one is defined by
\begin{align*}
F^{\tau_k}(\zeta):=
\frac{1}{\tau_k} \int_0^{\tau_k} g_{\zeta(t)}(\dot\zeta(t),\dot\zeta(t))\,\diff t=
\frac{1}{\tau_{k}} \sum_{i=0}^{k-1} 
\frac{\dist_M(\zeta(\tau_i),\zeta(\tau_{i+1}))^2}{\tau_{i+1}-\tau_i}.
\end{align*}
If $k'\neq0$, that is, if $\tau_{k'}>0$, the second one is defined analogously by
\begin{align*}
F^{\tau_{k'}}(\zeta):=
\frac{1}{\tau_{k'}} \int_0^{\tau_{k'}} g_{\zeta(t)}(\dot\zeta(t),\dot\zeta(t))\,\diff t=
\frac{1}{\tau_{k'}} \sum_{i=0}^{k'-1} 
\frac{\dist_M(\zeta(\tau_i),\zeta(\tau_{i+1}))^2}{\tau_{i+1}-\tau_i}.
\end{align*}
If $\tau_{k'}=0$, this second energy function will not be relevant, and we simply set it to be  $F^{\tau_{k'}}\equiv0$. Let us fix, once for all, an energy bound $b>0$. Notice that 
\[\dist_M(\zeta(\tau_i),\zeta(\tau_{i+1}))^2\leq(\tau_{i+i}-\tau_i)\tau_k F^{\tau_k}(\zeta).\] 
Therefore, if we choose $\ttau$ such that
\begin{align}\label{e:delta_tau}
\max\{\tau_{i+1}-\tau_i\ |\ i=0,...,k-1\}<\injrad(M,g)^2/(9 \tau_k b), 
\end{align}
we have
\begin{align*}
 \dist_M(\zeta(\tau_i),\zeta(\tau_{i+1}))
 <
 \injrad(M,g)/3,\qquad\forall \zeta\in\{F^{\tau_k}\leq b\},\ i=0,...,k-1.
\end{align*}
This implies that the sublevel set $\{F^{\tau_k}\leq b\}$ is compact. Actually, all sublevel sets $\{F^{\tau_k}\leq b'\}$ are compact provided $b'<9b$. On the other hand, the sublevel set $\{F^{\tau_{k'}}\leq b\}$ is never compact, because the energy function $F^{\tau_{k'}}$ is independent of $\zeta(\tau_{i})$ for all $i>k'$. Nevertheless, with the above choice of $\ttau$, the intersection $\{F^{\tau_k}\leq b\}\cap\{F^{\tau_{k'}}\leq b\}$ is compact.

\subsection{Polydiscs of broken geodesics}\label{s:polydiscs}
We fix a radius $R\in(0,\injrad(M,g)/3)$ small enough so that the Riemannian balls of $M$ of radius less than or equal to $R$ are geodesically convex: any pair of points in one such ball are joined by a unique shortest geodesic that is entirely contained in the ball (see e.g.\ \cite[page~76]{do_Carmo:Riemannian_geometry}). We will be dealing with polydisc neighborhoods of elements $\gamma\in\{ F^{\tau_k}\leq b\}$ of the form
\[
W(\gamma,r):=\big\{ \zeta\in\Upsilon \ \big|\ \dist_M(\zeta(\tau_i),\gamma(\tau_i))<r\quad \forall i=1,...,k \big\},
\]
where $r\in(0,R]$. This polydisc is simply the ball with radius $r$ centered at $\gamma$ for the distance $\dist_{\Upsilon}$. Since the sublevel set $\{F^{\tau_k}\leq b\}$ is compact, the restriction of the  function $F^{\tau_k}$ to it is uniformly continuous. Hence, we can require the upper bound $R$ for the radii to be small enough (depending on $b$) so that the closure of each of the above polydiscs is compact in $\Upsilon$.

A useful property of  $W(\gamma,r)$ is that every pair of  points in it can be joined by a unique shortest geodesic of $(\Upsilon,g\oplus...\oplus g)$. More specifically, for each $x,y\in M$ with $\dist_M(x,y)<\mathrm{injrad}(M,g)$, we denote by 
\begin{align}\label{e:geodesic_joining_x_y}
 \gamma_{x,y}:[0,1]\to M
\end{align}
the unique shortest geodesic of $(M,g)$ such that $\gamma(0)=x$ and $\gamma(1)=y$. It is well known that $\gamma_{x,y}$  depends smoothly on $(x,y)$, see e.g.\ \cite[Thm.~4.1.2]{Mazzucchelli:Critical_point_theory_for_Lagrangian_systems}. For each pair of elements $\zeta_0,\zeta_1\in W(\gamma,r)$, the unique shortest  geodesic $s\mapsto\zeta_s$  joining them is given by 
\[\zeta_s(\tau_i)=\gamma_{\zeta_0(\tau_i),\zeta_1(\tau_i)}(s),\qquad\forall i=0,...,k-1.\]
Indeed, by the geodesic convexity of Riemannian balls of radius $r$, for all $i\in\Z_k$ and $s\in[0,1]$ we have $\dist_M(\zeta_s(\tau_i),\gamma(\tau_i))< r$. Moreover 
\begin{align*}
\dist_M(\zeta_s(\tau_i),\zeta_s(\tau_{i+1})) 
 <\, & \dist_M(\zeta_s(\tau_i),\gamma(\tau_i))\\ 
 & + \dist_M(\gamma(\tau_i),\gamma(\tau_{i+1}))\\
 & +\dist_M(\gamma(\tau_{i+1}),\zeta_s(\tau_{i+1})) \\
<\, & r + \mathrm{injrad}(M,g)/3 +r \\
 <\, &\mathrm{injrad}(M,g). 
\end{align*}
Therefore, the whole curve $s\mapsto\zeta_s$ is well defined and lies inside $W(\gamma,r)$. In other words, $W(\gamma,r)$ is geodesically convex.

\subsection{Deformation to the space of broken geodesics}\label{s:deformation_to_Upsilon} 
Now, let $\gamma\in\Lambda(M;I)$ be an $I$-invariant geodesic that is periodic with minimal period $p\geq 1$. Consider the objects given by Grove and Tanaka's Lemma~\ref{l:Grove_Tanaka}: a period $q:=q_i$ that is a multiple of $p$, a set of integers $\M=\M_i\subset \N$, a non-negative number $q'=q'_i\in[0,q)$ such that $mp+1\equiv q'$ mod $q$, and a Hilbert manifold $\Omega=\Omega_i\subset \Lambda^{q}(M;\id)\cap\Lambda^{mp+1}(M;I)$. We will choose our energy bound $b$ of the previous section larger than $E(\gamma)$, and the vector $\ttau$ that enters the definition of $\Upsilon$ such that $\tau_{k'}=q'$ and $\tau_k=q$. 

For all $m\in\M$, we will see the space $\Upsilon$ as a submanifold of the Hilbert manifold $\Lambda^{mp+1}(M;I)$ via the embedding 
\begin{align*}
\omega^m:\Upsilon\hookrightarrow\Lambda^{mp+1}(M;I)
\end{align*}
uniquely defined by  
\begin{align*}
\omega^m(\zeta)(hq+t)=\zeta(t),\qquad\forall h\in\N, t\in[0,q] \mbox{ with }hq+t\in[0,mp+1].
\end{align*}
The map $\omega^m$ will play the role of the classical $m$-fold iteration map from the theory of closed geodesics. The various energy functions are related by 
\begin{equation}\label{e:comparison_energies}
\begin{split}
E^{mp+1}\circ\omega^m(\zeta) & = \tfrac{1}{mp+1} \left( \big\lfloor \tfrac{mp+1}{q} \big\rfloor q F^q(\zeta) + q' F^{q'}(\zeta) \right)\\
& = F^q(\zeta) + \tfrac{q'}{mp+1} \big(F^{q'}(\zeta)-F^q(\zeta) \big).
\end{split}
\end{equation}

Now, we  extend the time-intervals given by the vector $\ttau$ in a suitable periodic fashion. We define the bi-infinite sequence  $\{\nu_j\,|\,j\in\Z\}$ as follows. First, we set 
\begin{align*}
\nu_{hk+i}:=hq+\tau_i,\qquad\forall h\in\N,\ i\in\{0,...,k\} \mbox{ with } hq+\tau_i\in[0,mp+1].
\end{align*}
This defines $\nu_j$ for all $j\in\{0,...,j'\}$, where $j'=\lfloor\tfrac{mp+1}{q}\rfloor k+k'$. Notice that $\nu_{j'}=mp+1$. We complete the definition by setting
\begin{align*}
\nu_{hj'+j}=h(mp+1)+\nu_j,\qquad\forall h\in\Z, j\in\{0,...,j'-1\}.
\end{align*}
The sequence $\{\nu_j\,|\,j\in\Z\}$ is precisely the one such that, for all $\zeta\in\Upsilon$ and $i\in\Z$, the restriction  $\omega^m(\zeta)|_{[\nu_i,\nu_{i+1}]}$ is a geodesic of length less than $\injrad(M,g)$.

If $\zeta\in\Omega$ satisfies $E^{mp+1}(\zeta)=E^q(\zeta)\leq b$, then for all time values $t_0,t_1\in\R$ such that $0\leq t_1-t_0<\injrad(M,g)^2/(9qb)$, we have
\begin{align*}
\dist_M(\zeta(t_1),\zeta(t_0))
& \leq
\int_{t_0}^{t_1} \sqrt{g_{\zeta(t)}(\dot\zeta(t),\dot\zeta(t))}\,\diff t\\
& \leq
\sqrt{(t_1-t_0) q b }\\
& <
\injrad(M,g)/3.
\end{align*}
This, together with~\eqref{e:delta_tau}, allows us to define a continuous homotopy
\begin{align}\label{e:broken_deformation_retraction}
 r_s:\{ E^{mp+1}\leq b\}\cap\Omega \to \{ E^{mp+1}\leq b\},\qquad s\in[0,1],
\end{align}
by $r_s(\zeta):=\zeta_s$, where $\zeta_s\in\Lambda^{mp+1}(M;I)$ is the unique curve that coincides with $\zeta$ everywhere, except on the intervals of the form $[\nu_i,(1-s)\nu_{i}+s\nu_{i+1}]$ where it is equal to the shortest geodesic joining its endpoints. Notice that this latter shortest geodesic has length less than $\injrad(M,g)/3$, and therefore is well defined. By construction, this homotopy does not increase the energy. More precisely, for all $i\in\Z$ we have
\begin{align}\label{e:r_t_descreases_energy}
 \frac{\diff}{\diff s} \int_{\nu_i}^{\nu_{i+1}} g_{\zeta_s(t)}(\dot\zeta_s(t),\dot\zeta_s(t))\,\diff t \leq0.
\end{align}
The image of the time-1 map $r_1$ lies inside $\omega^m(\Upsilon)$, i.e.
\begin{align}\label{e:retraction_to_Upsilon}
r_1(\{ E^{mp+1}\leq b\}\cap\Omega)\subset \omega^m(\Upsilon). 
\end{align}
We recall that $E^{mp+1}|_\Omega = E^q|_\Omega$, and if $q'>0$ we further have $E^{mp+1}|_\Omega = E^{q'}|_\Omega$. This, together with~\eqref{e:r_t_descreases_energy}, implies that
\begin{align}\label{e:energy_estimates_for_r1}
\max\{F^{q'}\circ r_1(\zeta),F^q\circ r_1(\zeta)\}\leq E^{mp+1}(\zeta),\qquad \forall\zeta\in\Omega.
\end{align}

\section{The main lemma}\label{s:main_lemma}

\subsection{Statement and outline of the proof}
We say that an $I$-invariant geodesic $\gamma\in\crit(E)$ with minimal period $p\geq1$ is isolated when $\orb(\gamma^{mp+1})$ is isolated in $\crit(E)$ for all $m\in\M$. The following lemma will be the crucial ingredient for our main Theorem~\ref{t:main}. 

\begin{lem}\label{l:iterated_mountain_passes}
Let $\gamma\in\crit(E)$ be an isolated $I$-invariant geodesic  with minimal period $p\geq1$, and fix a degree $d\geq1$. For all $m\in\N$ large enough and for all sufficiently small neighborhoods $V$ of $\orb(\gamma^{mp+1})$ the following holds: given any compact domain $K\subset\R^d$ and any  continuous map $u:K\to \{E<E(\gamma^{mp+1})\}\cup V$ such that $u(\partial K)\cap V=\varnothing$, there exists a homotopy $u_s:K\to \Lambda(M;I)$ such that $u_0=u$, $u_1(K)\subset\{E<E(\gamma^{mp+1})\}$, and $u_s\equiv u$ outside $u^{-1}(V)$ for all $s\in[0,1]$.
\end{lem}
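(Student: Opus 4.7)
The plan is to exploit the Grove--Tanaka dichotomy of Lemma~\ref{l:mean_index}: either $\ind(E,\gamma^{mp+1})=0$ for every $m$, or $\ind(E,\gamma^{mp+1})\to\infty$. In the first, easy case each iterate is an isolated local minimum of $E$, so for $V$ small enough $\orb(\gamma^{mp+1})$ is the only critical orbit in $V$; any other point of $V$ must then be non-critical and, by the local minimum property together with the gradient flow, have strictly larger energy. Hence $V$ and $\{E<E(\gamma^{mp+1})\}$ are disjoint open subsets of $\Lambda(M;I)$. Working component by component on $K$, the boundary constraint $u(\partial K)\cap V=\varnothing$ together with connectedness forces $u(K)\subset\{E<E(\gamma^{mp+1})\}$ already, and the constant homotopy $u_s\equiv u$ does the job.

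Henceforth I would assume $\ind(E,\gamma^{mp+1})\to\infty$ as $m\to\infty$ and fix $m\in\M$ large enough that $\ind(E,\gamma^{mp+1})>d$. Using $\psi^{mp+1}$ to identify $\Lambda^{mp+1}(M;I)\cong\Lambda(M;I)$, I would transfer the problem to $\Lambda^{mp+1}(M;I)$, where $\gamma$ is an isolated critical point of $E^{mp+1}$ of index greater than $d$. Then bring in the Grove--Tanaka submanifold $\Omega\ni\gamma$ of Lemma~\ref{l:Grove_Tanaka} together with the broken-geodesic embedding $\omega^m:\Upsilon\hookrightarrow\Lambda^{mp+1}(M;I)$ of Section~\ref{s:broken_geodesics}, choosing $\tau_{k'}=q'$, $\tau_k=q$, and $\ttau$ fine enough that the index of $F^q$ at $\tilde\gamma:=(\omega^m)^{-1}(\gamma)$ still exceeds $d$. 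A preliminary homotopy, supported in $u^{-1}(V)$, first pushes $u|_{u^{-1}(V)}$ from a neighborhood of $\orb(\gamma)$ in $\Lambda^{mp+1}(M;I)$ into $\Omega$ (using the symmetry defining $\Omega$ together with the gradient tangency from Lemma~\ref{l:Grove_Tanaka}(i)), and then composes with the retraction $r_1$ of~\eqref{e:broken_deformation_retraction} to deposit it inside the finite-dimensional $\omega^m(\Upsilon)$, with the energy controlled by~\eqref{e:energy_estimates_for_r1}.

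The core descent is then local Morse theory on $\Upsilon$. The generalized Morse lemma of Gromoll--Meyer, applied $S^1$-equivariantly along the critical circle $\orb(\tilde\gamma)$, furnishes local coordinates $(\theta,x_-,x_+,y)\in S^1\times N^-\times N^+\times N^0$ in which $F^q=E(\gamma^{mp+1})+Q_-(x_-)+Q_+(x_+)+h(y)$, with $Q_-$ negative definite of rank $>d$, $Q_+$ positive definite, and $h$ having an isolated critical point at the origin of $N^0$. Since the vector bundle $N^-\to\orb(\tilde\gamma)\cong S^1$ has rank $>d\ge 1$, it admits a nowhere-vanishing continuous section $X$ (any vector bundle over $S^1$ of rank at least $2$ does, independently of orientability). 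With a cutoff $\phi:\Upsilon\to[0,1]$ supported in $V$ and equal to $1$ on a smaller neighborhood of $\orb(\tilde\gamma)$, the time-one flow of the vector field $\phi\cdot X$ (extended by zero outside $V$) supplies the required homotopy: it coincides with $\tilde u$ outside $\tilde u^{-1}(V)$, and thanks to $Q_-(sX)=s^2Q_-(X)<0$ it pushes every point of $\tilde u^{-1}(V)$ into $\{F^q<E(\gamma^{mp+1})\}$. For $m$ further enlarged, the comparison~\eqref{e:comparison_energies} converts this strict decrease of $F^q$ into a strict decrease of $E^{mp+1}$, and hence of $E$, below $E(\gamma^{mp+1})$.

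The main obstacle will be the construction of the preliminary ``projection into $\Omega$'' step: Lemma~\ref{l:Grove_Tanaka} furnishes $\Omega$ only as an embedded submanifold of $\Lambda^{mp+1}(M;I)$, so the homotopy bringing a $W^{1,2}$-neighborhood of $\orb(\gamma)$ inside $\Omega$ must be engineered to be continuous, energy-non-increasing, supported in $u^{-1}(V)$, and compatible with the subsequent retraction $r_1$ into $\omega^m(\Upsilon)$. Most of the delicate analytic work of the proof lives here; the local Morse descent on $\Upsilon$ and the verification that $\ind(F^q,\tilde\gamma)>d$ survives the broken-geodesic reduction are, by comparison, routine.
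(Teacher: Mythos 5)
Your division into cases inverts where the difficulty lies, and the case you dismiss as trivial is precisely the one the paper spends most of Section~\ref{s:main_lemma} on. You claim that if $\ind(E,\gamma^{mp+1})=0$ for all $m$, then each iterate is an isolated local minimum, so that $V$ and $\{E<E(\gamma^{mp+1})\}$ are disjoint and the constant homotopy works. This is false: zero Morse index does not imply local minimum, because the critical circle may be degenerate ($\nul>0$) and behave like a saddle in the null directions (already for functions on $\R^2$, e.g.\ $x^2-y^4$ at the origin, one has an isolated critical point of index zero which is not a minimum, and points of lower value accumulate at it). The whole point of Lemma~\ref{l:iterated_mountain_passes} is that such a degenerate, zero-index iterate can a priori be a $d$-dimensional mountain pass, and ruling this out requires the machinery you never invoke in this case: Grove--Tanaka's manifolds $\Omega_i$ with the nullity identity of Lemma~\ref{l:Grove_Tanaka}(iii), the deformation \eqref{e:broken_deformation_retraction} into the broken-geodesic space $\Upsilon$, the $\delta$-fine reference simplexes, the observation that the existence of $u$ forces $\gamma$ not to be a global minimum of $E^q|_\Omega$ in its component, and above all the Bangert--Klingenberg homotopies, whose energy error is of order $\mathrm{const}/m$ and which therefore push the simplexes below the level $c$ only after taking the iteration order $m$ large. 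None of this can be bypassed, and your proposal contains no substitute for it; as written, the zero-mean-index case is simply unproved.

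In the positive-mean-index case your plan is workable in spirit but misallocated and has gaps of its own. The paper handles this case (Lemma~\ref{l:iterated_mountain_passes_positive_index}) entirely by local analysis in $\Lambda(M;I)$ near the isolated critical circle, with no need for $\Omega$ or $\Upsilon$; your detour through the broken-geodesic reduction forces you to assert that the index of $F^q$ at the reduced critical circle still exceeds $d$, which is unjustified -- Lemma~\ref{l:Grove_Tanaka} controls only the nullity, and restriction to $\Omega$ (or to the associated $\Upsilon$) can decrease the index. Moreover, your descent homotopy, translation along a nowhere-vanishing section $X$ of $N^-$ cut off by $\phi$, is not energy non-increasing: for $x^-$ with $B_-(x^-,X)>0$ (e.g.\ $x^-$ a small negative multiple of $X$) the energy initially increases along the translation, so points in the region where $0<\phi<1$, which may lie arbitrarily close to the level $c$, can be pushed above $c$ and left there. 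Fixing this requires exactly the bookkeeping the paper performs: nested neighborhoods $V''\Subset V'\Subset V$ with a margin $\epsilon'$ on $u(K)\setminus V''$, a transversality perturbation of $u^-$ off the zero section (using $d<\ind$), and a fiberwise radial push that strictly decreases $E$ by concavity, gaining the definite amount $\delta R/2$. Finally, your closing assessment that the ``projection into $\Omega$'' is the delicate analytic heart is off the mark: in the paper that step is a short tubular-neighborhood retraction justified by \eqref{e:positive_definite}, while the genuinely delicate work is the iteration-dependent Bangert homotopy argument you omitted.
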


The special case in which the compact domain $K$ is 1-dimensional was established in \cite[Lemma~2.5]{Mazzucchelli:Isometry_invariant_geodesics_and_the_fundamental_group}, and was inspired by an analogous result due to Bangert~\cite{Bangert:Closed_geodesics_on_complete_surfaces} (see also \cite[Theorem~2.6]{Abbondandolo_Macarini_Mazzucchelli_Paternain:Infinitely_many_periodic_orbits_of_exact_magnetic_flows_on_surfaces_for_almost_every_subcritical_energy_level} for a similar result in the context of the Lagrangian free-period action functional). When $K$ has dimension at least 2, the situation becomes much more complicated, and we now give an outline of the argument leading to the proof.

There are two cases to consider. The first, easy one, is when the mean index of the geodesic $\gamma$ is positive, that is to say, the Morse index $\ind(E^{mp+1},\gamma)$ tends to infinity as $m\to\infty$. In particular, if $m$ is large, this index will be larger than the dimension of $K$, and the assertion of the lemma follows from standard arguments of non-linear analysis. This case will be treated in Lemma~\ref{l:iterated_mountain_passes_positive_index}. 

The second, difficult, case, is when the Morse index $\ind(E^{mp+1},\gamma)$ is zero for all $m\in\N$. Here, we need to employ Grove and Tanaka's Lemma~\ref{l:Grove_Tanaka}, which gives us finitely many Hilbert manifolds $\Omega_1,...,\Omega_n$ such that, for all $m\in\N$, the space of $I$-invariant curves $\Lambda^{mp+1}(M;I)$ contains one of the $\Omega_i$'s, and the Morse theory of the function $E^{mp+1}:\Lambda^{mp+1}(M;I)\to\R$ on a neighborhood $V$ of the critical circle $\orb(\gamma)$ is completely described by its restriction to the submanifold $\Omega_i$. The existence of a given map $u$ as in the lemma implies that $\gamma$ is not a global minimum of $E^{mp+1}$ in its connected component of $\Omega_i$. By the broken geodesics approximation of Section~\ref{s:deformation_to_Upsilon}, any sublevel set of the energy on the space $\Omega_i$ can be deformed to a finite dimensional submanifold $\Upsilon$ of broken $I$-invariant geodesics. We introduce a sufficiently fine triangulation $\Sigma$ of the connected component of $\Upsilon$ containing the original $I$-invariant geodesic $\gamma$. By a well-known technique due to Bangert and Klingenberg, for $m$ large enough,  every simplex $\sigma_0\in\Sigma$ can be deformed with a homotopy $\sigma_s$ inside the space $\Lambda^{mp+1}(M;I)$  such that the simplex $\sigma_1$ is contained in the sublevel set $\{E^{mp+1}<E(\gamma)\}$, and all faces of $\sigma_0$ that were already contained in this sublevel set are not moved by the homotopy. Now, we take $m$ large enough so that these homotopies are defined for all the simplexes in $\Sigma$. A map $u$ as in the lemma can be equivalently seen of the form $u:K\to\{E^{mp+1}<E(\gamma)\}\cup V$, where $V\subset\Lambda^{mp+1}(M;I)$ is an open neighborhood of $\gamma$. We take a fine triangulation $\Sigma'$ of the domain $K$. We show that it is possible to deform $u$ inside a subset of $u^{-1}(V)$ such that every simplex $\sigma'\in\Sigma'$ therein is mapped by $u$ to one of the simplexes of $\Sigma$, while the remaining simplexes of $\Sigma'$ are mapped inside the sublevel set $\{E^{mp+1}<E(\gamma)\}$ as they were originally. Finally, Bangert and Klingenberg's homotopies allow to deform $u|_{u^{-1}(V)}$ to a map that takes values inside $\{E^{mp+1}<E(\gamma)\}$. The details of this argument will be carried over in the next four subsections, and will culminate with the proof of Lemma~\ref{l:Bangert_homotopies}, which is Lemma~\ref{l:iterated_mountain_passes} in the case where $\gamma$ has zero mean index.

\subsection{Small neighborhoods of the critical circle}
Throughout Section~\ref{s:main_lemma}, we will work with an isolated $I$-invariant geodesic $\gamma\in\Lambda(M;I)$  with minimal period $p\geq1$ and  zero mean index, meaning that $\ind(E^{mp+1},\gamma)=0$ for all $m\in\N$. The case of positive mean index will be treated  in Section~\ref{s:positive_mean_index}. We set 
\[c:=E^{mp+1}(\gamma)=E(\gamma)=\int_0^1 g_{\gamma(t)}(\dot\gamma(t),\dot\gamma(t))\,\diff t, \]
and we stress that this action is independent of the order of iteration $m\in\N$. Let $\Omega=\Omega_i$ be one of the Hilbert manifolds given by Grove and Tanaka's Lemma~\ref{l:Grove_Tanaka}, $q:=q_i$ the basic period given there, and $\M=\M_i$ the subset of integers $m$ such that claims (i--iv) of the lemma hold. We equip $\Omega$ with the complete Riemannian metric pulled back from the space $\Lambda^{mp+1}(M;I)$ via the inclusion, for an arbitrary $m\in\M$. We denote by $\phi_t$ the anti-gradient flow of the energy $E^q|_\Omega$, which is precisely the restriction to $\Omega$ of the anti-gradient flow of $E^{mp+1}$.

\begin{lem}\label{l:shell}
For every sufficiently small open neighborhood $U'\subset\Omega$ of $\orb(\gamma)$  the following hold.
\begin{itemize}
\item[(i)] For all $\epsilon'>0$ there exists $\tau>0$ such that $\phi_t(U'\cap\{E^q|_\Omega\leq c-\epsilon'\})\cap U'=\varnothing$ for all $t\geq\tau$.
\item[(ii)] There exist $\epsilon>0$  and a smaller open neighborhood $U$ of $\orb(\gamma)$ whose closure is contained in $U'$ such that
$\phi_t(U) \setminus U'\subset \{E^q|_\Omega<c-\epsilon\}$ for all $t\geq0$.
\end{itemize}
\end{lem}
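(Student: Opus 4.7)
The approach relies on the Palais--Smale condition for $E^q|_\Omega$ combined with the fact that $\orb(\gamma)$ is an isolated critical set; the plan is to first shrink $U'$ so that the only critical points of $E^q|_\Omega$ in $\overline{U'}$ lie on $\orb(\gamma)$ (all at the common critical value $c$), and then derive (i) and (ii) from quantitative gradient lower bounds on suitable shell regions inside $\overline{U'}$.

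For part (i), fix $\epsilon'>0$. Palais--Smale together with the isolation of $\orb(\gamma)$ forces
\[
\delta:=\inf\bigl\{\,\|\nabla(E^q|_\Omega)(\zeta)\|\,\bigm|\,\zeta\in\overline{U'}\cap\{E^q|_\Omega\leq c-\epsilon'\}\,\bigr\}>0,
\]
since a vanishing sequence would be a Palais--Smale sequence at bounded energy, hence subconvergent to a critical point lying in $\overline{U'}$ at energy $\leq c-\epsilon'<c$, contradicting the reduction above. For $\zeta_0\in U'\cap\{E^q|_\Omega\leq c-\epsilon'\}$, the nonincrease of $E^q$ along $\phi_t$ confines the orbit to $\{E^q|_\Omega\leq c-\epsilon'\}$, and the gradient lower bound yields
\[
\mathrm{meas}\bigl\{\,s\geq 0\,\big|\,\phi_s(\zeta_0)\in\overline{U'}\,\bigr\}\leq (c-\epsilon')/\delta^2.
\]
This total-time bound, combined with an $\omega$-limit argument (any orbit visiting $\overline{U'}$ at arbitrarily large times would have an $\omega$-limit containing a critical point in $\overline{U'}$ at energy $\leq c-\epsilon'$, again violating isolation), gives a finite last-exit time for each individual orbit. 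To promote this to a uniform $\tau$ valid for all starting points simultaneously, I argue by contradiction: a sequence of orbits with blowing-up last-exit times would, via continuity of the flow on bounded time-intervals and a Palais--Smale extraction, produce a limiting orbit violating the measure bound.

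For part (ii), I choose an open neighborhood $U$ of $\orb(\gamma)$ with $\overline{U}\subset U'$ small enough that $E^q|_U\leq c+\beta$ for some $\beta>0$ to be fixed (possible since $E^q\equiv c$ on the compact orbit $\orb(\gamma)$). Let $d_0:=\dist_\Omega(\overline{U},\partial U')>0$. Applying Palais--Smale and isolation as in (i), now to the shell $\overline{U'}\setminus U$ intersected with $\{E^q|_\Omega\leq c+\beta\}$, produces $\delta_0>0$ with $\|\nabla E^q\|\geq\delta_0$ there. For $\zeta_0\in U$ whose orbit first exits $U'$ at time $T$, denote by $T_s$ the total time spent in the shell and by $D$ the cumulative energy drop accumulated over that portion; the orbit must traverse a distance of at least $d_0$ during the shell-crossing. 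Cauchy--Schwarz gives $d_0^2\leq T_s D$, while the gradient lower bound gives $T_s\delta_0^2\leq D$; multiplying yields $D\geq d_0\delta_0$, whence
\[
E^q(\phi_T(\zeta_0))\leq (c+\beta)-d_0\delta_0.
\]
Choosing $\beta:=d_0\delta_0/4$ and setting $\epsilon:=d_0\delta_0/2$ then gives $E^q(\phi_T(\zeta_0))<c-\epsilon$, a bound preserved for all later times the orbit is outside $U'$ by the nonincrease of $E^q$ along $\phi_t$.

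The hardest step is the uniformity of $\tau$ in part (i): the finite-measure bound for each individual orbit does not directly yield a single threshold valid for all initial conditions, and bridging this gap requires combining Palais--Smale compactness with flow-continuity estimates, in the absence of compactness of $\overline{U'}$ in the infinite-dimensional manifold $\Omega$.
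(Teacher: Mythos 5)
Your part (ii) is essentially sound and is, in substance, the paper's own shell-crossing estimate (length $\geq d_0$, gradient $\geq\delta_0$ on the shell, Cauchy--Schwarz giving an energy drop $\geq d_0\delta_0$); the only blemishes are fixable bookkeeping: $\dist_\Omega(\overline U,\partial U')>0$ is not automatic for arbitrary closed sets in the infinite-dimensional $\Omega$ (you should take $U$ and a comparison set to be tubular neighborhoods of the compact circle $\orb(\gamma)$ of explicit radii, as the paper does with $U(r_1)\subset U(r_2/2)\subset U(r_2)\subset U'$), and your choice $\beta:=d_0\delta_0/4$ is circular as written, since $U$, hence $d_0$ and $\delta_0$, were chosen using $\beta$; the paper avoids this by first fixing the shell $U(r_2)\setminus U(r_2/2)$, extracting $\epsilon$ from it, and only then shrinking $U=U(r_1)$ into $\{E^q|_\Omega<c+\epsilon\}$.

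Part (i), however, has a genuine gap, and it is exactly at the step you flag: the passage from a per-orbit finite-time bound to a uniform $\tau$. The ``Palais--Smale extraction plus flow continuity'' sketch does not work: the hypothetical initial points $\zeta_n$ with blowing-up return times have gradient norms bounded \emph{below} by $\delta$ on the relevant region, so they are not a Palais--Smale sequence and admit no convergent subsequence ($\overline{U'}$ is not compact), and continuity of the flow on bounded time intervals says nothing about times $t_n\to\infty$; even the per-orbit ``finite last-exit time'' is shaky, since finite total occupation time of $\overline{U'}$ does not preclude arbitrarily late, arbitrarily short visits, and your $\omega$-limit argument does not force the limiting critical point to lie in $\overline{U'}$. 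The missing idea is to use the freedom in the phrase ``sufficiently small $U'$'': the paper requires $U'\subset U(\rho_1)$ \emph{and} $U'$ disjoint from the sublevel set $\{E^q|_\Omega\leq c-(\rho_2-\rho_1)\mu(\rho_1,\rho_2)\}$, where $\mu(\rho_1,\rho_2)$ is the gradient bound on the shell $U(\rho_2)\setminus U(\rho_1)$ and $U(\rho_2)$ avoids $\{E^q|_\Omega<c-\epsilon''\}$. Then, with $\nu$ the gradient bound on $U(\rho_2)\cap\{E^q|_\Omega\leq c-\epsilon'\}$ and $\tau:=\epsilon''/\nu^2$, every orbit starting in $U'\cap\{E^q|_\Omega\leq c-\epsilon'\}$ must leave $U(\rho_2)$ before time $\tau$ (otherwise its energy would fall below $c-\epsilon''$ while remaining in $U(\rho_2)$), and the crossing costs at least $(\rho_2-\rho_1)\mu(\rho_1,\rho_2)$ of energy, after which the orbit lies below every energy level attained on $U'$ and hence can never re-enter $U'$. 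Both the uniformity of $\tau$ and the ``for all $t\geq\tau$'' (no returns) come from this energy-gap mechanism, not from any compactness; your proof of (i) contains neither ingredient.
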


\begin{proof}
We denote by $U(\rho)\subset \Omega$ the open tubular neighborhood of $\orb(\gamma)$ of radius $\rho$. The energy $E^q|_\Omega$ satisfies the Palais-Smale condition. Therefore, for all $0<\rho_1<\rho_2$ such that the closure of $U(\rho_2)$ does not contain critical circles of $E^q|_\Omega$ other than $\orb(\gamma)$, we have
\[
\mu(\rho_1,\rho_2):=\inf\big\{ \|\nabla E^q|_{\Omega}(\zeta)\|\ \big|\ \zeta\in U(\rho_2)\setminus U(\rho_1) \big\}>0.
\]
If $\zeta\in U(\rho_1)$ and $t>0$ are such that $\phi_t(\zeta)\not\in U(\rho_2)$, the curve $\phi_{[0,t]}(\zeta)$ must cross the shell $U(\rho_2)\setminus U(\rho_1)$ of Riemannian width $\rho_2-\rho_1$. Therefore, if we denote by $t'\in(0,t)$  the supremum of the times $s$ such that $\phi_{s}(\zeta)\in U(\rho_1)$, we have
\begin{equation}\label{e:energy_estimate_along_gradient_flow}
\begin{split}
E^q(\zeta)-E^q(\phi_t(\zeta))
&\geq
E^q(\phi_{t'}(\zeta))-E^q(\phi_t(\zeta))\\
&=
\int_{t'}^t \big\|\nabla E^q|_{\Omega}(\phi_s(\zeta))\big\| \cdot \big\|\tfrac{\diff}{\diff s}\phi_s(\zeta)\big\| \,\diff s\\
& \geq (\rho_2-\rho_1)\,\mu(\rho_1,\rho_2).
\end{split}
\end{equation}

We fix once for all two such values $0<\rho_1<\rho_2$, and we require $\rho_2$ to be small enough such that $U(\rho_2)$ does not intersect the sublevel set $\{E^q|_\Omega<c-\epsilon''\}$, for some $\epsilon''>0$. The open neighborhood $U'$ of the lemma must be small enough so that it is contained in $U(\rho_1)$ and  disjoint from the sublevel set $\{E^q|_\Omega\leq c-(\rho_2-\rho_1)\,\mu(\rho_1,\rho_2)\}$. Given $\epsilon'>0$, we set 
\begin{align*}
\nu & :=\inf\big\{ \|\nabla E^q|_{\Omega}(\zeta)\|\ \big|\ \zeta\in U(\rho_2)\cap\{E^q|_{\Omega}\leq c-\epsilon'\} \big\}>0,\\
\tau &:= \epsilon''/\nu^2.
\end{align*}
Let us prove point (i) of the lemma: we claim that, for every  $\zeta\in U'\cap\{E^q|_{\Omega}\leq c-\epsilon'\}$ and   $t\geq\tau$, we have that $\phi_t(\zeta)\not\in U'$. Indeed, the curve $\phi_{[0,t]}(\zeta)$ cannot be entirely contained in $U(\rho_2)$, since otherwise
\begin{align*}
E^q(\phi_t(\zeta)) & = E^q(\zeta) - \int_0^t \big\|\nabla E^q|_{\Omega}(\phi_s(\zeta))\big\|^2\,\diff s\\
&< c- \tau\, \nu^2\\
&= c- \epsilon''.
\end{align*}
Hence, by the energy estimate in~\eqref{e:energy_estimate_along_gradient_flow}, we have
\begin{align*}
E^q(\phi_t(\zeta))\leq E^q(\zeta) - (\rho_2-\rho_1)\,\mu(\rho_1,\rho_2) < c-(\rho_2-\rho_1)\,\mu(\rho_1,\rho_2),
\end{align*}
which proves our claim.

Now, we choose a radius  $r_2>0$ small enough so that $U(r_2)\subset U'$, and we set $\epsilon:=\mu(r_2/2,r_2) r_2/4$. We also choose a radius $r_1\in(0, r_2/2)$ small enough so that 
\[U:=U(r_1)\subset\{E^q|_\Omega<c+\epsilon\}.\] 
Let $\zeta\in U$ be a point such that $\phi_t(\zeta)\not\in U'$ for some $t>0$. In particular, $E^q(\zeta)<c+\epsilon$ and the estimate~\eqref{e:energy_estimate_along_gradient_flow} implies
\begin{align*}
E^q(\zeta)-E^q(\phi_t(\zeta))
&\geq \mu(r_2/2,r_2)\,r_2/2= 2\epsilon.
\end{align*}
This proves point (ii).
\end{proof}

\subsection{Reference simplexes}\label{s:reference_simplexes}
In this section, we will also need to consider the real number $q'=q_i'\in[0,q)$ given by Lemma~\ref{l:Grove_Tanaka}. We fix, once for all, an energy bound $b>c=E^{q}(\gamma)$ and a vector $\ttau=(\tau_0,...,\tau_k)\in\R^{k+1}$ such that $0=\tau_0<\tau_1<...<\tau_k=q$, $\tau_{k'}=q'$ for some $k'\in\{0,...,k-1\}$, and the inequality~\eqref{e:delta_tau} is satisfied. With the notation of Section~\ref{s:broken_geodesics}, we consider the space of broken geodesics $\Upsilon$ associated to $\ttau$, and the energy functions $F^q$ and $F^{q'}$. We will also consider the polydiscs defined in Section~\ref{s:polydiscs}, whose radii are smaller than or equal to $R$.

We fix an arbitrarily small $\epsilon>0$. Since the sublevel set $\{F^q\leq b\}$ is compact, the functions $F^q$ and $F^{q'}$ are uniformly equicontinuous on this set. Namely, there exists $\delta\in(0,R]$ such that
\begin{equation}\label{e:uniform_continuity}
\begin{split}
\max\big\{
F^q(\zeta_0)-F^q(\zeta_1),F^{q'}(\zeta_0)-F^{q'}(\zeta_1)
\big\}
<\epsilon,\\
\forall \zeta_0,\zeta_1\in\{F^q\leq b\}\mbox{ with }\dist_{\Upsilon}(\zeta_0,\zeta_1)<\delta.
\end{split}
\end{equation}
We denote by $\Upsilon''$ the union of those connected components of $\Upsilon$ intersecting the sublevel set $\{F^q<c\}$. We can find a finite subset $\Sigma_0=\{\sigma_1,...,\sigma_h\}\subset \{F^{q}\leq b\}\cap\Upsilon''$ that is $\delta/5$-dense in the sublevel set, that is,
\begin{align}\label{e:reference_0_simplexes}
\{F^{q}\leq b\}\cap\Upsilon'' \subset \bigcup_{\sigma\in\Sigma_0} W(\sigma,\delta/5).
\end{align}
Notice that, since $\delta<R$, the union of the polydiscs $W(\sigma,\delta/5)$ is relatively compact in $\Upsilon$. For each pair of (not necessarily distinct) $\sigma_i,\sigma_j\in\Sigma_0$ such that $\dist_{\Upsilon}(\sigma_i,\sigma_j)<\delta$, we denote by $\sigma_{ij}:[0,1]\to \Upsilon$ the $1$-simplex given by the minimal geodesic of $\Upsilon$ joining $\sigma_i$ and $\sigma_j$. We denote by $\Sigma_1$ the finite collection of all these 1-simplexes. We now inductively define the finite collections $\Sigma_d$ for increasing values of $d$, starting for $d=2$. Given (not necessarily pairwise distinct) $\sigma_{i_0},...,\sigma_{i_d}\in\Sigma_0$ such that $\dist_{\Upsilon}(\sigma_{i_j},\sigma_{i_l})<\delta$ for all $j,l=0,...,d$, we define $\sigma_{i_0...i_d}:\Delta^d\to \Upsilon$ to be the $d$-simplex whose $j$-th face is the $(d-1)$-simplex \[\sigma_{i_0...\widehat{i_{j}}...i_d}\in\Sigma_{d-1},\] and whose interior is defined as follows: we see the standard $d$-simplex 
\[\Delta^d=\big\{x\in[0,1]^d\,\big|\,\textstyle\sum_{i}x_i\leq1\big\}\] as a union of affine curves of slope $(1,...,1)\in\R^d$, so that each $x\in\Delta^d$ belongs to the curve joining the points $\alpha(x),\omega(x)\in\partial\Delta^d$, and the maps $\alpha:\Delta^d\to\partial\Delta^d$ and $\omega:\Delta^d\to\partial\Delta^d$ are continuous; we define the restriction of $\sigma_{i_0...i_d}$ to the curve passing through $x\in\Delta^d$ as the geodesic of $\Upsilon$ joining $\sigma_{i_0...i_d}(\alpha(x))$ and $\sigma_{i_0...i_d}(\omega(x))$. Notice that, by the geodesic convexity of polydiscs of radius less than or equal to  $R$, every $d$-simplex $\sigma_{i_0...i_d}\in\Sigma_d$ is entirely contained in the polydisc of radius $\delta$ centered at any of its $0$-faces.

\subsection{Bangert's homotopies}

For all $m\in\M$, the space of broken geodesics $\Upsilon$ can be seen as a submanifold of $\Lambda^{mp+1}(M;I)$ via the  embedding $\omega^m:\Upsilon\hookrightarrow \Lambda^{mp+1}(M;I)$ defined in Section~\ref{s:deformation_to_Upsilon}. In this section, we will further need to assume that our $I$-invariant geodesic $\gamma$ is not a global minimum of the energy $E^q|_{\Omega}$ inside its connected component of $\Omega$. Since the homotopy $r_s$ of equation~\eqref{e:broken_deformation_retraction} decreases the energy and, in particular, its time-1 map satisfies~\eqref{e:energy_estimates_for_r1}, the element $r_1(\gamma)$ is not a global minimum of $F^q$, nor of $F^{q'}$ if $q'>0$, in its connected component of $\Upsilon$. By modifying an argument due to Bangert and Klingenberg \cite[Thm.~2]{Bangert_Klingenberg:Homology_generated_by_iterated_closed_geodesics} we will show that, if $m$ is large, the images of the simplexes constructed in the previous section under the map $\omega^m$ can be pushed, in a suitable way, below the critical level $c$.

The main ingredient for such statement is Bangert's well known technique of ``pulling one loop at the time''. Given any continuous map 
$\theta_0:\Delta^j\to\Lambda^{q}(M;\id)\subset\Lambda^{mq}(M;\id)$, 
we can deform it with a homotopy $\theta_s:\Delta^j\to\Lambda^{mq}(M;\id)$, $s\in[0,1]$,  such that 
\begin{itemize}
\setlength{\itemsep}{3pt}
\item $\theta_s|_{\partial\Delta^j}=\theta_0|_{\partial\Delta^j}$,
\item $\theta_s(x)(0)=\theta_0(y_s(x))(0)$ for a suitable continuous homotopy $y_s:\Delta^j\to\Delta^j$ such that $y_0=\id$,
\item $E^{mq}\circ\theta_1\leq \max\{E^q(\theta_0(x))\, |\, x\in\partial\Delta^j\}  + \mathrm{const}/m$, where $\mathrm{const}\geq0$ is a quantity depending only on $\theta_0$ (in particular, independent of $m$).
\end{itemize}
The proof of this fact is outlined with our notation and for $j=1$ in \cite[Sect.~3.2]{Mazzucchelli:On_the_multiplicity_of_isometry_invariant_geodesics_on_product_manifolds}, see in particular Figure~1 therein. The case $j>1$ is not harder: it suffices to see the $j$-simplex as a smooth family of $1$-simplexes, and apply to each of them the construction. In the following, we will refer to homotopies of this kind as to Bangert's homotopies.

\begin{lem}\label{l:Bangert_homotopies}
For every sufficiently large integer $m\in\M$ we can associate to each $j$-simplex $\sigma\in\Sigma_0\cup...\cup\Sigma_d$ a homotopy 
\begin{align}\label{e:h_sigma}
h_\sigma:[0,1]\times\Delta^j\to \Lambda^{mp+1}(M;I)
\end{align} 
with the following properties:
\begin{itemize}
\setlength{\itemsep}{3pt}
\item[(i)] $h_\sigma(0,\cdot)=\omega^m\circ\sigma$,
\item[(ii)] if  $\sigma$ is contained in $\{F^q<c\}$, then  $h_\sigma(s,\cdot)=\omega^m\circ\sigma$ for all $s\in[0,1]$,
\item[(iii)] $h_\sigma(s,F_l(\cdot))=h_{\sigma\circ F_l}(s,\cdot)$ for all $l=0,...,j$, where $F_l:\Delta^{j-1}\to\partial\Delta^j$ is the affine map onto the $l$-th face of $\Delta^j$,
\item[(iv)] $h_\sigma(1,\cdot)$ is mapped in $\{E^{mp+1}<c\}$.
\end{itemize}
\end{lem}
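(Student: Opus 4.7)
The plan is to construct the homotopies $h_\sigma$ by induction on the dimension $j$ of the simplexes in $\Sigma_0\cup\cdots\cup\Sigma_d$, combining the hypothesis that $\gamma$ is not a global minimum of $E^q|_\Omega$ in its connected component of $\Omega$ with Bangert's ``pulling one loop at a time'' technique described above. Since each vertex $\sigma\in\Sigma_0$ lies in $\Upsilon''$, i.e.\ in a connected component of $\Upsilon$ meeting $\{F^q<c\}$, I would first fix, once and for all, a continuous descent path $\beta_\sigma:[0,1]\to\Upsilon$ from $\sigma$ to some $\tilde\sigma$ with $F^q(\tilde\sigma)<c$. By the finiteness of $\Sigma_0\cup\cdots\cup\Sigma_d$, one may then choose a uniform gap $\epsilon_0>0$ such that $F^q(\beta_\sigma(1))\le c-\epsilon_0$ for every $\sigma\in\Sigma_0$, and $F^q\le c-\epsilon_0$ holds on the image of every simplex already entirely contained in $\{F^q<c\}$.

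In the base case $j=0$, I would set $h_\sigma(s,\cdot)\equiv\omega^m(\sigma)$ when $F^q(\sigma)<c$, and $h_\sigma(s,\cdot):=\omega^m(\beta_\sigma(s))$ otherwise. In either case the energy identity~\eqref{e:comparison_energies} yields
\[
E^{mp+1}\bigl(h_\sigma(1,\cdot)\bigr)\le c-\epsilon_0+O(1/m),
\]
which is strictly less than $c$ for all $m$ sufficiently large, so (iv) holds, while (i)--(iii) are immediate.

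For the inductive step, assume $h_{\sigma'}$ has been defined for every $(j-1)$-simplex $\sigma'$. Given $\sigma\in\Sigma_j$ whose image lies entirely in $\{F^q<c\}$, I would take $h_\sigma(s,\cdot)\equiv\omega^m\circ\sigma$; each face homotopy is then constant by induction, so (iii) holds. Otherwise, I would first invoke the homotopy extension property of the CW pair $\bigl([0,1]\times\Delta^j,\,\{0\}\times\Delta^j\cup[0,1]\times\partial\Delta^j\bigr)$ to extend the prescribed data, namely $\omega^m\circ\sigma$ on the bottom face and the face homotopies $h_{\sigma\circ F_l}$ on the lateral faces, to a preliminary homotopy $\tilde h_\sigma:[0,1]\times\Delta^j\to\Lambda^{mp+1}(M;I)$; by the geodesic convexity of the polydisc $W(\sigma_{i_0},\delta)$ containing $\sigma(\Delta^j)$ and the uniform continuity estimate~\eqref{e:uniform_continuity}, this extension may be kept inside $\omega^m(\Upsilon)$ and inside a sublevel set of $E^{mp+1}$ of uniformly bounded height. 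The top slice $\tilde h_\sigma(1,\cdot):\Delta^j\to\Lambda^{mp+1}(M;I)$ already sends $\partial\Delta^j$ into $\{E^{mp+1}\le c-\epsilon_0/2\}$ by induction and for $m$ large, but its interior need not descend below $c$. To remedy this I would apply Bangert's pulling-loops homotopy rel $\partial\Delta^j$ to $\tilde h_\sigma(1,\cdot)$, producing a map whose energy is bounded by $\max_{\partial\Delta^j}E^{mp+1}\circ\tilde h_\sigma(1,\cdot)+\mathrm{const}/m$, and hence strictly less than $c$ for $m$ sufficiently large.

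The hardest point is adapting the pulling-loops technique to the present isometry-invariant setting: Bangert's classical construction takes maps into $\Lambda^q(M;\id)$ and produces homotopies in $\Lambda^{mq}(M;\id)$, whereas here the homotopy must be valued in $\Lambda^{mp+1}(M;I)$. This requires the copies of the base loop of length $q$ being pulled to be chosen in a manner compatible with the $I$-action, which on $\omega^m(\Upsilon)$ shifts time by $q'$ modulo $q$; moreover, the pulls must be arranged coherently across all simplexes of all dimensions to maintain the face-compatibility condition (iii) throughout the induction. The case $j=1$ of this adaptation is contained in \cite[Sect.~3.2]{Mazzucchelli:On_the_multiplicity_of_isometry_invariant_geodesics_on_product_manifolds}; the general case should follow by viewing a $j$-simplex as a smooth family of $1$-simplexes and running the construction slice by slice, with face-compatibility enforced by the inductive setup.
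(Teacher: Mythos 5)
Your skeleton (induction on simplex dimension, Bangert homotopies applied rel boundary, constancy on simplexes already below level $c$, and choosing $m$ large at the end) is the same as the paper's, but the point you defer as ``the hardest point'' is precisely where the argument stops, and it is the actual content of the proof rather than a routine adaptation. Two concrete problems. First, Bangert's estimate $E^{mq}\circ\theta_1\leq\max_{\partial\Delta^j}E^q\circ\theta_0+\mathrm{const}/m$ is for a \emph{fixed} map $\theta_0$ into $\Lambda^q(M;\id)$, iterated inside $\Lambda^{mq}(M;\id)$, with the constant depending on $\theta_0$; you apply it to the top slice $\tilde h_\sigma(1,\cdot)$, which is an $m$-dependent map into $\Lambda^{mp+1}(M;I)$ produced by an unspecified homotopy extension, so the ``$\mathrm{const}/m$'' bound is circular (nothing makes the constant uniform in $m$), and moreover its curves are not $q$-periodic while $(mp+1)/q$ is not an integer (indeed $p$ may be irrational), so the pulling construction does not apply to that slice at all. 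Second, the HEP gives no energy control, and the claim that the extension can be kept inside $\omega^m(\Upsilon)$ is inconsistent with your own inductive data: the face homotopies $h_{\sigma\circ F_l}$ already contain Bangert stages and leave $\omega^m(\Upsilon)$.

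What is missing is the paper's two-stage device. One first builds, once and for all and independently of $m$, homotopies $h'_\sigma$ with values in the \emph{identity}-invariant spaces $\Lambda^{m_jq}(M;\id)$, where $m_0\,|\,m_1\,|\,\dots\,|\,m_d$ are fixed at each inductive stage so that the Bangert estimate (constant depending only on $\sigma$) lands strictly below $c$; the induction is staggered in $s$ by steps of $1/(d+1)$, and the extension over $\Delta^j$ uses the explicit retraction of $[0,j]\times\Delta^j$ onto $(\{0\}\times\Delta^j)\cup([0,j]\times\partial\Delta^j)$, which simply reuses already constructed data and hence inherits its energy bounds (no HEP, no convexity claim needed). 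Crucially, one also records a basepoint-tracking map $y_\sigma$ with $h'_\sigma(s,x)(0)=\sigma(y_\sigma(s,x))(0)$. Only then does one pass to $\Lambda^{mp+1}(M;I)$: writing $mp+1=m'm_dq+q''$ with $q''\equiv q'\ (\mathrm{mod}\ q)$ by Lemma~\ref{l:Grove_Tanaka}(iv), one defines $h_\sigma(s,x)$ to equal $h'_\sigma(s,x)$ on $[0,m'm_dq]$ followed on $[m'm_dq,mp+1]$ by the tail $\psi\circ\sigma(y_\sigma(s,x))(\cdot-m'm_dq)$, where $\psi$ extends a broken curve $q$-periodically; the basepoint tracking together with $\sigma(y)(q')=I(\sigma(y)(0))$ is exactly what makes this an element of $\Lambda^{mp+1}(M;I)$, and the energy differs from $E^{m_dq}(h'_\sigma(s,x))$ by $c'/(mp+1)$ with $c'$ bounded uniformly in $m\in\M$, which yields property~(iv) for $m$ large. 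Without this construction (or an equivalent mechanism for handling the leftover interval of length $q''$ and the $I$-boundary condition with $m$-uniform error), your inductive step does not go through.
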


\begin{proof}
The proof is essentially the same as the one of \cite[Lemma~3.3]{Mazzucchelli:On_the_multiplicity_of_isometry_invariant_geodesics_on_product_manifolds}. Since there are some technical differences in the setting and in the statement, we provide the complete argument for the reader's convenience.

We define the embedding  
\[\psi:\Upsilon\hookrightarrow\Lambda^q(M;\id)\] 
in the obvious way: for all $\zeta\in\Upsilon$, the image $\psi(\zeta)$ is the unique $q$-periodic curve whose restriction to the interval $[0,q]$ is precisely equal to $\zeta$. Notice that, by the definition of $\Upsilon$, we have that 
\[\psi(\zeta)(mp+1)=\psi(\zeta)(q')=I(\psi(\zeta)(0)),\] 
and indeed $\psi(\zeta)|_{[0,mp+1]}=\omega^m(\zeta)|_{[0,mp+1]}$.

Let us construct the homotopies of the lemma, beginning with the 0-simplexes $\sigma\in\Sigma_0$. Since $F^q(r_1(\gamma))=E^q(\gamma)=c$ and $r_1(\gamma)$ is not a global minimum of $F^q$ in its connected component of $\Upsilon$, there exists a continuous path $s\mapsto \sigma_s\in\Upsilon$ such that $\sigma_0=\sigma$ and $F^q(\sigma_1)<c$. If $F^q(\sigma)<c$ and $F^{q'}(\sigma)<c$ we choose this path to be the stationary one $\sigma_s\equiv\sigma$. We define a map 
\[h_\sigma':[0,1]\times\Delta^0=[0,1]\to\Lambda^q(M;\id)\] 
by
\begin{align*}
h_\sigma'(s)=
\left\{
  \begin{array}{lll}
    \psi(\sigma_{(d+1)s}), &  & \mbox{if }s\in[0,1/(d+1)], \\ 
    \psi(\sigma_1) &  & \mbox{if }s\in[1/(d+1),1]. \\ 
  \end{array}
\right.
\end{align*}
Notice that
\[
E^q(h_\sigma'(s))=F^q(\sigma_1)<c,\qquad\forall s\in\big[ \tfrac{1}{d+1},1 \big].
\]
We also set $m_0:=1$.

We proceed to construct homotopies for the higher dimensional simplexes iteratively in the degree $j$, from $j=1$ upward. We consider a large enough  integer $m_j$ that is divisible by $m_{j-1}$, so that $\Lambda^{m_{j-1}q}(M;\id)\subset\Lambda^{m_{j}q}(M;\id)$. The precise value of $m_j$ will be fixed in a moment. Consider $\sigma\in\Sigma_j$. We construct the maps  
\begin{align*}
h_\sigma'&:[0,1]\times\Delta^j\to\Lambda^{m_jq}(M;\id)\\ 
y_\sigma &:[0,1]\times\Delta^j\to\Delta^j 
\end{align*}
as follows. We set $h_\sigma'(0,\cdot):=\psi\circ\sigma$ and $y_\sigma(0,\cdot):=\id$. For all $l=0,...,j$ and $s\in[0,1]$, we set $h_\sigma'(s,F_l(\cdot)):=h_{\sigma\circ F_l}'(s,\cdot)$ and $y_\sigma(s,F_l(\cdot)):=y_{\sigma\circ F_l}(s,\cdot)$, where the maps $h_{\sigma\circ F_l}'$  and $y_{\sigma\circ F_l}$ were defined in the previous step of the iterative procedure. Up to now, we have defined the maps $h_\sigma'$ and $y_\sigma$ on $(\{0\}\times\Delta^j)\cup([0,1]\times\partial\Delta^j)$. The previous step of this iterative procedure was carried out in such a way that
\begin{align*}
h_\sigma'(s,\cdot)|_{\partial\Delta^j}=h_\sigma'(\tfrac{j}{d+1},\cdot)|_{\partial\Delta^j},\qquad\forall s\in\big[\tfrac{j}{d+1},1\big],
\end{align*}
and
\begin{align}\label{e:bound_previous_step}
\max\big\{E^{m_{j-1}q}(h_\sigma'(\tfrac{j}{d+1},x))\, |\, x\in\partial\Delta^j\big\}<c.
\end{align}
We choose a retraction 
\[\pi_j: [0,j]\times\Delta^j\to(\{0\}\times\Delta^j)\cup([0,j]\times\partial\Delta^j),\] 
and we set 
\begin{align*}
h_\sigma'(s,x):=h_{\sigma}'(s,\pi_j(x)),\qquad y_\sigma(s,x):=y_\sigma(s,\pi_j(x)),\\
\forall[0,j]\times\Delta^j. 
\end{align*}
Finally, we define the homotopies $s\mapsto h_\sigma'(s,\cdot)$ and $s\mapsto y_\sigma(s,\cdot)$, for $s\in\big[\tfrac{j}{d+1},\tfrac{j+1}{d+1}\big]$, by means of a Bangert homotopy associated to the $j$-simplex 
\[h_\sigma'(\tfrac{j}{d+1},\cdot):\Delta^j\to\Lambda^{m_{j-1}q},\] 
and we extend them constantly for $s\in\big[\tfrac{j+1}{d+1},1\big]$. Summing up, we have constructed the maps $h_\sigma'$ and $y_\sigma$ such that
\begin{itemize}
\setlength{\itemsep}{3pt}
\item[(i')] $h_\sigma'(0,\cdot)=\psi\circ\sigma$,
\item[(ii')] $h_\sigma'(s,F_l(\cdot))=h_{\sigma\circ F_l}'(s,\cdot)$ for all $l=0,...,j$,
\item[(iii')] $h_\sigma'(s,\cdot)=h_{\sigma}'(\tfrac{j+1}{d+1},\cdot)$ for all $s\in\big[\tfrac{j+1}{d+1},1\big]$,
\item[(iv')] $E^{m_j q}(h_\sigma'(\tfrac{j+1}{d+1},\cdot))\leq \max\{E^{m_{j-1}q}(h_\sigma'(\tfrac{j}{d+1},x))\, |\, x\in\partial\Delta^j\}  + \tfrac{\mathrm{const}}{m_j/m_{j-1}}$, where $\mathrm{const}\geq0$ is a quantity depending only on $\sigma$,
\item[(v')] $h_\sigma'(s,x)(0)=\sigma(y_\sigma(s,x))(0)$.
\end{itemize}
We choose $m_j$ large enough so that, by (iii'), (iv'), and~\eqref{e:bound_previous_step}, we have
\begin{align}\label{e:estimate_energy_h_sigma'}
E^{m_j q}(h_\sigma'(s,\cdot))< c,\qquad\forall s\in\big[\tfrac{j+1}{d+1},1\big].
\end{align}
If $\sigma$ is mapped inside the sublevel set $\{F^q<c\}$, then we define the above maps simply by $h_\sigma'(s,x):=\sigma(x)$ and $y_\sigma(s,x):=x$, and notice that conditions (i'--v') are still verified.

Now, consider an integer $m\in\M$, that we will require to be large enough in a moment. We set $m':=\big\lfloor\tfrac{mp+1}{m_dq}\big\rfloor$ and $q'':=mp+1-m'm_dq \in[0,m_dq)$, so that
\[mp+1=m'm_dq+q''.\] 
Notice that, by Lemma~\ref{l:Grove_Tanaka}(iv),
\[q''\equiv q'\quad\mathrm{mod}\ q.\] 
For all $\sigma\in\Sigma_0\cup...\cup\Sigma_d$, we define the homotopy $h_\sigma$ of the lemma by
\begin{align*}
h_\sigma(s,x)(t):=
\left\{
  \begin{array}{lll}
    h_\sigma'(s,x)(t), &  & \mbox{if }t\in\left[0, m'm_dq \right], \\ 
    \psi\circ\sigma(y_\sigma(s,x))(t-m'm_dq) &  &  \mbox{if }t\in\left[m'm_dq,mp+1 \right].
  \end{array}
\right.
\end{align*}
Notice that
\begin{align*}
h_\sigma(s,x)(mp+1)&=\psi\circ\sigma(y_\sigma(s,x))(q'')\\
&=\sigma(y_\sigma(s,x))(q')\\
&=I(\sigma(y_\sigma(s,x))(0))\\
&= I(h_\sigma'(s,x)(0))\\
&= I(h_\sigma(s,x)(0)).
\end{align*}
Therefore, $h_\sigma$ is a well defined map of the form~\eqref{e:h_sigma}. Properties~(i--iii) readily follow from its construction. As for property~(iv),  we have
\begin{align*}
E^{mp+1}(h_\sigma(1,x))
& =
\frac{1}{mp+1}
\Big(
m'm_dq E^{m_dq}(h_\sigma'(1,x))\\ 
&\qquad\qquad\quad + \lfloor q''/q\rfloor q F^q(\sigma(y_\sigma(1,x)))\\
&\qquad\qquad\quad + q' F^{q'}(\sigma(y_\sigma(1,x)))
\Big)\\
& = E^{m_dq}(h_\sigma'(1,x)) + \frac{c'}{mp+1}, 
\end{align*}
where 
\begin{align*}
c'  :=\, &\lfloor q''/q\rfloor q F^q(\sigma(y_\sigma(1,x)))+ q' F^{q'}(\sigma(y_\sigma(1,x)))-q''E^{m_dq}(h_\sigma'(1,x))\\
<\, & m_d q \max \{F^q\circ\sigma\}   + q' \max \{F^{q'}\circ\sigma\}.
\end{align*}
Notice that the quantity $c'$ is uniformly bounded independently of $m\in\M$. Since $E^{m_dq}(h_\sigma'(s,x))<c$, if $m\in\M$ is large enough we have $E^{mp+1}(h_\sigma(1,x))<c$, which proves property~(iv).
\end{proof}

\subsection{The case of zero mean index}\label{s:main_lemma_zero_index}
For every integer $m\in\M$, Lemma~\ref{l:Grove_Tanaka}(iii) implies that $\nul(E^q|_\Omega,\gamma)=\nul(E^{mp+1},\gamma)$, and, by our assumption, $\ind(E^q|_\Omega,\gamma)=\ind(E^{mp+1},\gamma)=0$. Let $U'\subset\Omega$ be a small enough open neighborhood of $\orb(\gamma)$ such that Lemma~\ref{l:shell} applies to it, and, for some $h>0$ and for all $\zeta\in U'$ and vector fields $\xi\in\Tan_{\zeta}\Lambda^{mp+1}(M;I)$ orthogonal to $\Omega$, we have
\begin{align}\label{e:positive_definite}
\Hess E^{mp+1}(\zeta)[\xi,\xi]\geq h \|\xi\|_\zeta^2.
\end{align}
Here, the norm on the right hand side is the one induced by the Riemannian metric on the Hilbert manifold $\Lambda^{mp+1}(M;I)$.   Lemma~\ref{l:shell} gives us a quantity $\epsilon>0$ and a smaller open set $U$ of $\orb(\gamma)$ whose closure is contained in $U'$ satisfying the properties stated there. Within the proof of Lemma~\ref{l:iterated_mountain_passes_zero_mean_index}, we will also need to fix $\delta>0$ small enough so that~\eqref{e:uniform_continuity} holds, and we will consider the $\delta/5$-dense reference simplexes built in Section~\ref{s:reference_simplexes}.

Let $U_m\subset\Lambda^{mp+1}(M;I)$ be a small tubular neighborhood of $U$, which is diffeomorphic to an open neighborhood of the zero-section in the normal bundle of $U$. Consider the associated radial deformation retraction 
\begin{align}\label{e:Morse_deformation_retraction}
 f_s:U_m\to U_m,
\end{align}
which satisfies $f_0=\id$ and $f_1(U_m)=U$. Since, by Lemma~\ref{l:Grove_Tanaka}(i), the gradient of $\nabla E^{mp+1}$ is tangent to $\Omega$ along $\Omega$, the restriction of the energy $E^{mp+1}$ to a fiber of $f_1$ has a critical point at the intersection of the fiber with $\Omega$. By the convexity~\eqref{e:positive_definite}, this critical point is a strict minimum. In particular, if the tubular neighborhood $U_m$ is sufficiently small, the deformation $f_s$ does not increase the energy, i.e.\ 
$\tfrac{\diff}{\diff s} E^{mp+1}\circ f_s\leq0$.

\begin{lem}\label{l:iterated_mountain_passes_zero_mean_index}
Let $\gamma\in\crit(E)$ be an isolated $I$-invariant geodesic that is periodic with minimal period $p\geq1$, having energy $c:=E(\gamma)$, and satisfying $\ind(E^{mp+1},\gamma)=0$ for all $m\in\N$. Fix a degree $d\geq1$. For all $m\in\N$ large enough and for all sufficiently small neighborhoods $V\subset\Lambda^{mp+1}(M;I)$ of $\orb(\gamma)$ the following holds: given any compact domain $K\subset\R^d$ and any continuous map $u:K\to \{E^{mp+1}<c\}\cup V$ such that $u(\partial K)\cap V=\varnothing$, there exists a homotopy $u_s:K\to \Lambda^{mp+1}(M;I)$ such that $u_0=u$, $u_1(K)\subset \{E^{mp+1}<c\}$, and $u_s\equiv u$ outside $u^{-1}(V)$  for all $s\in[0,1]$.
\end{lem}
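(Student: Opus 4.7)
The plan is to combine all the machinery built up in this section: the radial Morse retraction $f_s$ of a tubular neighborhood $U_m$ onto the Grove--Tanaka submanifold $U\subset\Omega$, the broken-geodesic retraction $r_s$ of~\eqref{e:broken_deformation_retraction}, the reference simplicial complex $\Sigma_0\cup\cdots\cup\Sigma_d$ of Section~\ref{s:reference_simplexes}, and the Bangert--Klingenberg homotopies $h_\sigma$ of Lemma~\ref{l:Bangert_homotopies}. I fix $m\in\M$ large enough for Lemma~\ref{l:Bangert_homotopies} to apply, and take the neighborhood $V$ of $\orb(\gamma)$ so small that $V\subset U_m$ and $V\subset\{E^{mp+1}<c+\epsilon'\}$ with $c+\epsilon'<b$. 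One may assume that $u(K)\cap V\neq\varnothing$; moreover, if $\gamma$ happened to be a global minimum of $E^q|_\Omega$ on its connected component of $\Omega$, then the convexity estimate~\eqref{e:positive_definite} would force $V\cap\{E^{mp+1}<c\}=\varnothing$ and the hypothesis $u(\partial K)\cap V=\varnothing$ together with the connectedness of the components of $u(K)$ would make the conclusion trivial. Hence I may assume $\gamma$ is not a global minimum, so that Lemma~\ref{l:Bangert_homotopies} is available.

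The proof then consists of four concatenated homotopies, each supported in $u^{-1}(V)$, each fixing $u$ on the complement of $u^{-1}(V)$, and each keeping the image of the total map inside $\{E^{mp+1}<c\}\cup V$. First, using the radial retraction $f_s$ (which does not increase $E^{mp+1}$), push $u$ to a map $u^{(1)}$ whose values on $u^{-1}(V)$ lie in $U\subset\Omega$. Second, applying $r_s$ pointwise to these values, deform further to $u^{(2)}$ whose values on $u^{-1}(V)$ lie in $\omega^m(\Upsilon)\cap\{F^q\leq c+\epsilon'\}\subset\omega^m(\Upsilon)\cap\{F^q\leq b\}$, using~\eqref{e:energy_estimates_for_r1}.

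Third, choose a sufficiently fine triangulation $\Sigma'$ of $K$, with $\partial K$ and the closure of $u^{-1}(V)$ as subcomplexes, such that each top simplex $\sigma'\in\Sigma'$ meeting $u^{-1}(V)$ has image under $u^{(2)}$ of $\dist_\Upsilon$-diameter less than $\delta/5$. By~\eqref{e:reference_0_simplexes}, assign to each vertex of $\Sigma'$ inside $u^{-1}(V)$ a nearest reference $0$-simplex in $\Sigma_0$; by geodesic convexity of the polydiscs of radius $\leq R$, linearly interpolate along broken-geodesic paths in $\Upsilon$ to deform $u^{(2)}$ to a map $u^{(3)}$ which on each simplex $\sigma'\subset u^{-1}(V)$ factors as $\omega^m\circ\sigma$ for a reference simplex $\sigma\in\Sigma_0\cup\cdots\cup\Sigma_d$, and which is unchanged outside $u^{-1}(V)$. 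The uniform continuity estimate~\eqref{e:uniform_continuity} keeps the energy inside $[0,c+2\epsilon']\subset[0,b)$ throughout this stage. Fourth, apply the Bangert homotopy $h_\sigma$ to each such simplex: properties (ii)--(iii) of Lemma~\ref{l:Bangert_homotopies} ensure that the $h_\sigma$ glue along common faces and extend by the trivial homotopy outside $u^{-1}(V)$, while property (iv) places the final map in $\{E^{mp+1}<c\}$.

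The main obstacle is the third step: arranging a triangulation of $K$ compatible with the boundary of $u^{-1}(V)$ and simultaneously snapping each top simplex inside $u^{-1}(V)$ to a reference simplex of $\Sigma_0\cup\cdots\cup\Sigma_d$ while keeping energies below the allowed threshold and preserving the simplexes outside $u^{-1}(V)$ intact. Lemma~\ref{l:Bangert_homotopies}(ii)--(iii) are the crucial tools that allow the Bangert homotopies of step four to glue consistently across the interface with the complement of $u^{-1}(V)$; one must also calibrate $V$, the modulus $\delta$ of~\eqref{e:uniform_continuity}, and the neighborhoods $U\subset U'$ provided by Lemma~\ref{l:shell} in a very specific order so that the four estimates chain together without any intermediate stage exceeding the permitted energy region $\{E^{mp+1}<c\}\cup V$.
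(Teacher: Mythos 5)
Your overall architecture (Morse retraction $f_s$ into $\Omega$, broken-geodesic retraction $r_s$ into $\omega^m(\Upsilon)$, snapping a fine triangulation onto the reference simplexes of Section~\ref{s:reference_simplexes}, then Bangert homotopies) matches the paper's, and your preliminary reduction to the case where $\gamma$ is not a global minimum of $E^q|_\Omega$ is legitimate. But there is a genuine gap at exactly the point you flag as ``the main obstacle'': you never produce the quantitative energy buffer that makes the interface work, and the estimates you do keep ($V\subset\{E^{mp+1}<c+\epsilon'\}$, energies in $[0,c+2\epsilon']$ during the snapping) are not good enough. The Bangert homotopy $h_\sigma$ is trivial only for simplexes already contained in $\{F^q<c\}$ (property~(ii) of Lemma~\ref{l:Bangert_homotopies}); for the simplex-by-simplex homotopies to glue continuously with the part of the map you are not allowed to touch, the simplexes at the outer rim of the triangulated region must lie in $\{F^q<c\}$ \emph{after} snapping, and moreover the final map must already be strictly below level $c$ on the whole cutoff/transition zone, since no further deformation acts there. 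In your scheme the map in that zone is only known to have energy $\leq c+2\epsilon'$, and even where it is just below $c$ the snapping (which moves points by up to $\delta$ in $\dist_\Upsilon$ and hence may raise $F^q,F^{q'}$ by up to $\epsilon$ via~\eqref{e:uniform_continuity}) can push it to or above $c$. So either the glued map is discontinuous at the rim, or $u_1$ fails to land in $\{E^{mp+1}<c\}$ there; asserting that properties (ii)--(iii) ``ensure'' the gluing begs precisely this question.

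What is missing is the paper's intermediate step based on Lemma~\ref{l:shell}, which you never invoke: after retracting into $U\subset\Omega$, one flows with the (cut-off) anti-gradient flow, $x\mapsto\phi_{\tau\chi''(x)}(u(x))$, for a large time $\tau$. By Lemma~\ref{l:shell}(i) the image of the boundary sphere of the inner region leaves $U'$, and by Lemma~\ref{l:shell}(ii) everything that leaves $U'$ drops below $c-\epsilon$. This creates a shell $C$ on which the map is at energy $<c-\epsilon$ and an inner compact part $D$ (still near $\orb(\gamma)$) which alone needs the simplicial/Bangert treatment; the modulus $\delta$ of~\eqref{e:uniform_continuity} is calibrated against this very $\epsilon$, so that after snapping the shell stays in $\{F^q<c\}\cap\{F^{q'}<c\}$, the Bangert homotopies are trivial there by property~(ii), the deformation is supported in the triangulated neighborhood of $D$, and the final map is below $c$ everywhere on $u^{-1}(V)$. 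Without this gradient-flow buffer (or an equivalent mechanism producing a region of definite energy drop separating $\orb(\gamma)$ from the untouched part of the map), the chain of estimates in your steps three and four does not close, so the proof as written is incomplete.
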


\begin{proof}
Consider one of the infinite subsets $\M$ of the partition $\M_1\cup...\cup\M_n$  given by Lemma~\ref{l:Grove_Tanaka}. We prove the lemma for any $m\in\M$ large enough so that the homotopies of Lemma~\ref{l:Bangert_homotopies} exist.

The neighborhood $V\subset\Lambda^{mp+1}(M;I)$ of the critical circle $\orb(\gamma)$ must be small enough so that it is contained into the neighborhood $U_m$ constructed above. Let $V''\Subset V'\Subset V$ be  slightly smaller  neighborhoods of $\orb(\gamma)$ in $\Lambda^{mp+1}(M;I)$ such that 
$u(K)\setminus V''\subset\{E^{mp+1}<c\}$. Here, the symbol $\Subset$ means that the closure of the former set in contained in the latter set. Fix $\epsilon'>0$ small enough so that 
\begin{align}\label{e:outside_below}
u(K)\setminus V''\subset\{E^{mp+1}\leq c-\epsilon'\}.
\end{align} 
We set $A'':=u^{-1}(V'')$, $A':=u^{-1}(V')$ and $A:=u^{-1}(V)$. Since $u(\partial K)\cap V=\varnothing$, we have that 
\[A''\Subset A'\Subset A.\] 
Moreover, \eqref{e:outside_below} can be rewritten as 
$u(K\setminus A'')\subset\{E^{mp+1}\leq c-\epsilon'\}$. We introduce two  bump functions: a smooth function  $\chi':K\to[0,1]$ that is identically equal to $1$ on $A'$ and has support inside $A$, and  a smooth function  $\chi'':K\to[0,1]$ that is identically equal to $1$ on $A''$ and has support inside $A'$.

Consider the deformation  retraction  $f_s$ of equation~\eqref{e:Morse_deformation_retraction}. We replace our $u$ of the statement with $x\mapsto f_{\chi'(x)}\circ u(x)$, so that 
$u(\overline{A'})\subset U\subset\Omega$. We denote by $\phi_s$ the anti-gradient flow of $E^{mp+1}$, and we recall that, by Lemma~\ref{l:Grove_Tanaka}(i), $\Omega$ is invariant by this flow. By~\eqref{e:outside_below} we have that
\[u(\partial A'')\subset \{E^{mp+1}|_\Omega < c-\epsilon'\}.\] 
By Lemma~\ref{l:shell}, there exists $\tau>0$ large enough so that $\phi_\tau( u(\partial A''))\cap U'=\varnothing$ and 
\[\phi_{\tau}(u(A''))\setminus U'\subset\{E^{mp+1}<c-\epsilon\}.\]
We replace $u$ with $x\mapsto \phi_{\tau\chi''(x)}\circ u(x)$.

Summing up, we have deformed $u$ by means of a homotopy supported inside $A$ to a new map, which we still denote by $u$, satisfying
\begin{align}
\nonumber u(K\setminus A'') &\subset \{E^{mp+1}<c\},\\
\nonumber u(A') &\subset \Omega,\\
\nonumber u(\partial A'')\cap U' &=\varnothing,\\
u(A'')\setminus U' & \subset \{E^{mp+1}<c-\epsilon\}. \label{e:u_outside_U}
\end{align}
Now, we make a partition 
\[
A''=C\cup D
\] 
in two regions: the open shell $C:=A''\setminus \overline{u^{-1}(U')}$, and the compact subset $D:= A''\setminus C$. Consider the homotopy $r_s$ of equation~\eqref{e:broken_deformation_retraction}. We further replace $u$ with 
$x\mapsto r_{\chi''(x)}\circ u(x)$, so that, by~\eqref{e:retraction_to_Upsilon}, we have that $u(A'')\subset \omega^m(\Upsilon)$. Since $\omega^m$ is an embedding, there exists a unique continuous map $v:A''\to\Upsilon$   such that 
\[u|_{A''}=\omega^m\circ v.\]
Equations~\eqref{e:u_outside_U} and~\eqref{e:energy_estimates_for_r1} imply
\begin{align}\label{e:v(C)_contained_low}
v(C)\subset\{F^q<c-\epsilon\}\cap\{F^{q'}<c-\epsilon\}.
\end{align}

Now, consider the simplexes in $\Sigma_1,...,\Sigma_d$ constructed in Section~\ref{s:reference_simplexes}. We introduce a sufficiently fine triangulation of a compact neighborhood $\Sigma'\subset A''$ of $D$, so that the image of every simplex under $v$ is contained in a polydisc of $\Upsilon$ of diameter $\delta/5$ (see Section~\ref{s:polydiscs}). Up to further reducing the size of all simplexes by applying finitely many  barycentric subdivisions to $\Sigma'$, we can assume that the subset $\Sigma''$ given by all the simplexes intersecting $D$ is contained in the interior of $\Sigma'$. We denote by $\Sigma_j'$, for $j=0,...,d$, the collection of $j$-dimensional faces of the simplexes in $\Sigma'$. We choose a map $\iota_0:\Sigma_0'\to\Sigma_0$ such that $\dist(v(\sigma),\iota_0(\sigma))<\delta/5$ for all $\sigma\in\Sigma_0'$. The existence of such a map is guaranteed by equation~\eqref{e:reference_0_simplexes}. We  define maps $\iota_j:\Sigma_j'\to\Sigma_j$ in a compatible way: given a $j$-simplex $\sigma\in\Sigma_j'$ whose ordered set of $0$-faces is $\sigma_0,...,\sigma_j\in\Sigma_0'$, its image $\iota_j(\sigma)$ is the (unique) simplex in $\Sigma_j$ whose ordered set of $0$-faces is $\iota_0(\sigma_0),...,\iota_0(\sigma_j)\in\Sigma_0$. Notice that such a $\iota_j(\sigma)$ exists, since
\begin{align*}
 \dist_{\Upsilon}(\iota_0(\sigma_h),\iota_0(\sigma_l))
 \leq\, &
 \dist_{\Upsilon}(\iota_0(\sigma_h),v(\sigma_h))\\
 & +
 \dist_{\Upsilon}(v(\sigma_h),v(\sigma_l)) \\ 
 & +
 \dist_{\Upsilon}(v(\sigma_l),\iota_0(\sigma_l)) \\
 <\, & \tfrac15 \delta+\tfrac15 \delta+\tfrac15 \delta\\
=\, & \tfrac35\delta.
\end{align*}
In particular, the $0$-faces $\iota_0(\sigma_1),...,\iota_0(\sigma_j)$ are contained in  $W(\iota_0(\sigma_0),\tfrac35\delta)$. Since this polydisc is geodesically convex, the whole simplex $\iota_j(\sigma)$  is contained in $W(\iota_0(\sigma_0),\tfrac35\delta)$. The $C^0$-distance between the simplexes $v\circ\sigma$ and $\iota_j(\sigma)$ can be estimated as
\begin{equation}\label{e:C0_distance_from_references}
 \begin{split}
 \dist_{\Upsilon}(v(\sigma(x)),\iota_j(\sigma)(x))
 \leq\, &
 \dist_{\Upsilon}(v(\sigma(x)),v(\sigma_0))\\
& +
 \dist_{\Upsilon}(v(\sigma_0),\iota_0(\sigma_0))\\
& +
 \dist_{\Upsilon}(\iota_0(\sigma_0),\iota_j(\sigma)(x)) \\
 <\, & \tfrac15 \delta +\tfrac15 \delta +\tfrac35 \delta\\
 =\, &\delta.
\end{split}
\end{equation}
We define a homotopy $h_s:\Sigma'\to \Upsilon$ simplex by simplex as follows. Consider any $d$-simplex  $\sigma:\Delta^d\to \Sigma'$  belonging to $\Sigma_d'$, and any point $x\in\Delta^d$. We set $s\mapsto h_s(\sigma(x))$ to be the unique minimal geodesic of $\Upsilon$ joining $h_0(\sigma(x))=v(\sigma(x))$ and $h_1(\sigma(x))=\iota_d(\sigma)(x)$. Let $\chi:A''\to[0,1]$ be a smooth bump function that is identically 1 on $\Sigma''$ and is supported inside $\Sigma'$. We define a homotopy $v_s:A''\to \Upsilon$ by
\[
v_s(x):=h_{s\chi(x)}(x),
\]
so that $v_0=v$, $v_s|_{\Sigma''}=h_s|_{\Sigma''}$, and $v_s$ is identically equal to $v$ outside $\Sigma'$.  Notice that, by the very construction of it, the homotopy $v_s$ still satisfies the same estimate as in~\eqref{e:C0_distance_from_references}, i.e.
\begin{equation*}
 \dist_{\Upsilon}(v_s(\sigma(x)),\iota_j(\sigma)(x))<\delta.
\end{equation*}
This, together with~\eqref{e:v(C)_contained_low} and the uniform equicontinuity~\eqref{e:uniform_continuity}, implies that
\begin{align}\label{e:vs_outside_Sigma''}
 v_s(A''\setminus\Sigma'')\subset v_s(C)\subset \{F^q<c\}\cap\{F^{q'}<c\}.
\end{align}
By equation~\eqref{e:comparison_energies}, we have
\begin{align*}
\omega^m\circ v_s(C)\subset \{E^{mp+1}<c\}.
\end{align*}

Now, we apply Lemma~\ref{l:Bangert_homotopies}: for every $\sigma\in\Sigma'_d$ we obtain a Bangert homotopy $h_{v_1(\sigma)}$ with the properties (i--iv) given there. Notice that, by property~(iii), these homotopies coincide on common faces of simplexes in $\Sigma'$. Moreover, by~\eqref{e:vs_outside_Sigma''} and property~(ii), for every $\sigma\in\Sigma'_d\setminus\Sigma''_d$ the homotopy $h_{v_1(\sigma)}$ is identically equal to $\omega^m\circ v_1\circ\sigma$. By patching together  the homotopies $h_{v_1(\sigma)}$,  we can build a homotopy 
\[w_s:A''\to \Lambda^{mp+1}(M;I),\qquad s\in[0,1],\] so that $w_0=\omega^m\circ v_1$, $w_s$ is identically equal to $\omega^m\circ v_1$ outside  $\Sigma''$, and \[w_1(A'')\subset\{E^{mp+1}<c\}.\] Finally, we construct the homotopy $u_s$ of the lemma as
\[
u_s(x)
:=
\left\{
  \begin{array}{lll}
    u(x), &  &\mbox{if } x\in K\setminus A'', \\ 
    \omega^m\circ v_{2s}(x),\Big. &  & \mbox{if } x\in A'',\ s\in\big[0,\tfrac12\big], \\ 
    w_{2s-1}(x) &  &  \mbox{if } x\in A'',\ s\in\big[\tfrac12,1\big].  
  \end{array}
\right.
\qedhere
\]
\end{proof}

\subsection{The case of positive mean index}\label{s:positive_mean_index}
Let now $\gamma$ be an $I$-invariant geodesic exactly as in the previous sections, except that it has positive mean index, meaning that $\ind(E,\gamma^{mp+1})$ is not zero for some $m\in\N$. Grove and Tanaka's Lemma~\ref{l:mean_index} implies that such index tends to infinity as $m\to\infty$. In this case, the assertion of Lemma~\ref{l:iterated_mountain_passes} becomes a consequence of a general result from non-linear analysis (see e.g.\ \cite[Thm.~2.1 on page~92]{Chang:Infinite_dimensional_Morse_theory_and_multiple_solution_problems} for a similar statement in the isolated critical point case). We provide the proof in our setting of isometry-invariant geodesics, but the reader can easily extract an abstract statement that works with smooth functions having isolated critical circles.

\begin{lem}\label{l:iterated_mountain_passes_positive_index}
Let $V\subset\Lambda(M;I)$ be a sufficiently small neighborhood of an isolated critical circle $\orb(\gamma)\subset\crit(E)$. Consider a degree $d$ such that $1\leq d<\ind(E,\gamma)$, a compact domain $K\subset \R^d$, and a continuous map $u:K\to \{E<E(\gamma)\}\cup V$ such that $u(\partial K)\cap V=\varnothing$. There exists a homotopy $u_s:K\to \Lambda(M;I)$ such that $u_0=u$, $u_1(K)\subset \{E<E(\gamma)\}$, and $u_s\equiv u$ outside $u^{-1}(V)$ for all $s\in[0,1]$.
\end{lem}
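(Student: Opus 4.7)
The plan is to exploit the fact that a critical circle whose Morse index exceeds $d$ is topologically invisible to $d$-dimensional homotopies. Contrary to the zero-mean-index case of Section~\ref{s:main_lemma_zero_index}, the argument is purely local around $\orb(\gamma)$ and uses neither iteration, nor broken geodesics, nor Grove and Tanaka's lemma; it is a local Morse-theoretic statement, entirely analogous in spirit to \cite[Thm.~2.1 on page~92]{Chang:Infinite_dimensional_Morse_theory_and_multiple_solution_problems} in the isolated-critical-point case.

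Concretely, I would begin by applying the Gromoll and Meyer generalized Morse lemma, equivariantly under the $\R$-translation action, to identify a sufficiently small neighborhood $V$ of $\orb(\gamma)$ with a neighborhood of the zero section of a Hilbert bundle $N = N_+ \oplus N_0 \oplus N_- \to \orb(\gamma)$, where $\dim N_- = \ind(E,\gamma) \geq d+1$, and such that
\[
E = c + \|x_+\|^2 - \|x_-\|^2 + f(x_0),
\]
with $f:N_0\to\R$ smooth and having an isolated critical point at $0$. The local stable manifold of $\orb(\gamma)$ is then $W^s_{\mathrm{loc}} = \{x_-=0\}$, a submanifold of $V$ of codimension $\ind(E,\gamma) \geq d+1$. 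A shell-type argument as in Lemma~\ref{l:shell} shows that, after further shrinking $V$ so that $\sup_V E \leq c + \epsilon_0$ for a suitably small $\epsilon_0$, every anti-gradient trajectory starting in a smaller inner tube either stays close to $W^s_{\mathrm{loc}}$ for all time or exits $V$ at a point of $\{E<c\}$.

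Following the layout of the proof of Lemma~\ref{l:iterated_mountain_passes_zero_mean_index}, I would next choose nested neighborhoods $V''\Subset V'\Subset V$ and corresponding sets $A''\Subset A'\Subset A := u^{-1}(V)$, and employ two bump-function cutoffs, supported respectively in $A$ and $A'$, to deform $u$ (without altering it near $\partial K$) into a map whose restriction to $K\setminus A''$ already lies in $\{E<c\}$ and whose restriction to $A''$ lies in a small Morse tube around $\orb(\gamma)$. At this point the crucial transversality step enters: since $\dim A''\leq d < \dim N_-$, a general-position argument (Sard-Smale applied to the smooth projection $\pi_-:V\to N_-$) lets me perturb $u|_{A''}$ by an arbitrarily small $C^0$-homotopy, supported inside $A''$ via a further bump function, so that $\pi_- \circ u$ is nowhere zero on $A''$; equivalently, the perturbed image misses $W^s_{\mathrm{loc}}$. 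Once $u(A'')$ is disjoint from $W^s_{\mathrm{loc}}$, the explicit deformation $\phi_s(x_+,x_0,x_-) = (x_+/(1+s),\, x_0,\, (1+s)x_-)$ in Morse coordinates, cut off to remain inside $V$ and then followed by a short anti-gradient flow, drives $E$ below $c$ in finite time, since the dominant term $-(1+s)^2\|x_-\|^2$ tends to $-\infty$ while $\|x_+\|^2/(1+s)^2 + f(x_0)$ stays bounded on the compact image.

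The main obstacle is the fine choreography of the nested neighborhoods and bump-function cutoffs, so that each successive homotopy is strictly supported inside the preceding one and the image never escapes the Morse chart before reaching $\{E<c\}$. These manipulations are standard but somewhat fiddly, and closely parallel the opening paragraphs of the proof of Lemma~\ref{l:iterated_mountain_passes_zero_mean_index}.
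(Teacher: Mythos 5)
Your proposal is correct and follows essentially the same route as the paper: identify a fibered normal form near $\orb(\gamma)$, use the hypothesis $d<\ind(E,\gamma)$ and a transversality/general-position perturbation to push the image of $u$ off the zero section of the negative bundle $N^-$, and then expand in the negative directions (with a bump-function cut-off supported in $u^{-1}(V)$) so that the fiberwise concavity drives the energy below $E(\gamma)$. The only differences are cosmetic: the paper avoids the full Gromoll--Meyer splitting, using instead fiberwise concavity plus an implicit-function-theorem normalization of the $x^-$ direction, and it does not need your preliminary shell/anti-gradient-flow and nested-neighborhood steps, which are superfluous here.
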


\begin{proof}
The normal bundle of the critical circle $\orb(\gamma)\subset\Lambda(M;I)$ splits as the Whitney sum
\[N(\orb(\gamma))=N^0\oplus N^+\oplus N^-,\]
where the fibers of $N^0$ are the intersection of null-space of the Hessian of $E$ with the normal bundle $N(\orb(\gamma))$, while the fibers of $N^+$ and $N^-$ are the negative and positive eigenspaces respectively of the Hessian of $E$. Let 
\[\pi:N(\orb(\gamma))\to \orb(\gamma)\simeq S^1\]  
be the projection of the normal bundle onto the base. Let $V^0_R\subset N^0$, $V^+_R\subset N^+$, and $V^-_R\subset N^-$ be the open neighborhoods of the 0-section of radius $R>0$. We choose $R$ sufficiently small so that, by means of the exponential map of $\Lambda(M;I)$, we can identify $V^0_R\oplus V^+_R\oplus V^-_R$ with an open neighborhood of the critical circle $\orb(\gamma)$, and moreover there exists $\delta>0$ such that
\begin{equation}\label{e:negative_definite}
\begin{split}
&\partial_{x^-x^-}^2 E(x^0,x^+,x^-)[v,v]<-\tfrac\delta2 \|v\|_{\pi(x)}^2,\\
&\qquad\qquad\forall x=(x^0,x^+,x^-)\in V^0_R\oplus V^+_R\oplus V^-_R,\  v\in \pi|_{N^-}^{-1}(\pi(x)).
\end{split}
\end{equation}
Here, the norm on the bundle $N^-$ is the one associated with the Riemannian metric of $\Lambda(M;I)$. By the implicit function theorem, there exists $r\in(0,R/2)$ and a bundle map 
\[\phi:V^0_r\oplus V^+_r\to V^-_{R/2}\] 
such that, for each $(x^0,x^+,x^-)\in V^0_r\oplus V^+_r\oplus V^-_{R/2}$, we have $\partial_{x^-}E(x^0,x^+,x^-)=0$ if and only if $x^-=\phi(x^0,x^+)$. We use $\phi$ to define the diffeomorphism onto its image
\[\Phi:V^0_r\oplus V^+_r\oplus V^-_{R/2}\to V^0_r\oplus V^+_r\oplus V^-_R\] 
given by $\Phi(x^0,x^+,\phi(x^0,x^+)+x^-)$. From now on, we will replace the energy function $E$ by the composition $E\circ \Phi$, so that $\partial_{x^-}E(x^0,x^+,x^-)=0$ if and only if $x^-=0$. By equation~\eqref{e:negative_definite} we have
\begin{equation}\label{e:negative_margin}
 \begin{split}
E(x^0,x^+,x^-)\leq E(x^0,x^+,0)-\delta\,\|x^-\|^2_{\pi(x)},\qquad\\
\forall x=(x^0,x^+,x^-)\in V^0_r\oplus V^+_r\oplus V^-_{R/2}.
\end{split}
\end{equation}
Let $\eta_s:V^-_{R/2}\setminus\{0\mbox{-section}\}\to  \overline{V^-_{R/2}}\setminus\{0\mbox{-section}\}$, for $s\in[0,1]$, be the radial deformation 
\[\eta_s(x^-)= \left((1-s)+ s \frac{R}{2\|x^-\|_{\pi(0,0,x^-)}}\right) x^-,\]
whose time-1 map $\eta_1$ is the radial retraction onto $\partial V^-_{R/2}$. Since $x^-\mapsto E(x^0,x^+,x^-)$ is a fiberwise strictly concave function, it decreases along the deformation $\eta_s$, i.e.\ 
\[\tfrac{\diff}{\diff s} E(x^0,x^+,\eta_s(x^-))<0.\]
Now, let $\rho\in(0,r/2)$ be so small that 
\begin{align}\label{e:positive_margin}
\sup_{V^0_{\rho}\oplus V^-_{\rho}\oplus V^+_{\rho}} E < E(\gamma)+\delta R/2.
\end{align} 
We require the neighborhood $V$ of the statement to be small enough so that
\[
V\subset V^0_{\rho/2}\oplus V^-_{\rho/2}\oplus V^+_{\rho/2}.
\]
Let $u:K\to \{E<E(\gamma)\}\cup V$ be a continuous map such that $u(\partial K)\cap V=\varnothing$. We consider the open set $A:=u^{-1}(V)$. Notice that there exists a compact subset $K'\subset A$ and a quantity $\epsilon>0$ such that $u(K\setminus K')\subset\{E<E(\gamma)-\epsilon\}$. Therefore, for our purpose, we only need to focus on the restriction
\[u|_{A}=(u^0,u^+,u^-):A\to V.\] 
By our assumption, the degree $d=\dim(K)$ is smaller than the Morse index $\ind(E,\gamma)$, which is the rank of the bundle $N^-$. Therefore, by the  transversality theorem, we can perturb the map $u^-$ inside a compact subset of $A$ in order to  obtain that $u^-(K')$ does not intersect the 0-section of $N^-$. The perturbation can be chosen arbitrarily small in the $C^0$-topology, and in particular so small that the perturbed $u$ still maps the complement of $K'$ to the sublevel set $\{E<E(\gamma)\}$, and $A$ to the set $V^0_\rho\oplus V^+_\rho\oplus V^-_\rho$.

Let $\chi:K\to[0,1]$ be a smooth map that is identically 1 on $K'$ and has support inside $A$. The desired homotopy $u_s:K\to \Lambda(M;I)$ will be given by 
\[u_s(z):=(u^0(z),u^+(z),\eta_{\chi(z)s}\circ u^-(z)),\qquad\forall z\in A.\] Indeed, $u_s$ is stationary on the complement of the compact subset $\mathrm{supp}(\chi)\subset A$, it does not increase the energy, i.e.
\[
\tfrac{\diff}{\diff s} E(u_s(z))\leq0,
\]
and, by equations~\eqref{e:negative_margin} and~\eqref{e:positive_margin}, its time-1 map $u_1$ satisfies
\[
E(u_1(z))
\leq
E(u(z))  - \delta R/2
< E(\gamma),\qquad\forall z\in K'.\qedhere
\]
\end{proof}

\section{The homotopic multiplicity result}\label{s:main_theorem}

Let $(M,g)$ be a non-simply connected closed Riemannian manifold. Each connected component of its free loop space $\Lambda(M;\id)$ is not simply connected. Indeed, the fundamental group of $M$ injects into the fundamental group of the connected components of $\Lambda(M;\id)$ of the contractible loops; moreover, given any element $\gamma\in\Lambda(M;\id)$, its orbit under the circle action $t\cdot\gamma=\gamma(t+\cdot)$ is a contractible loop in $\Lambda(M;\id)$ if and only if $\gamma$ itself is a contractible loop in $M$. 

Let $I$ be an isometry of $(M,g)$. If $I$ is homotopic to the identity (not necessarily through isometries), the space of $I$-invariant curves $\Lambda(M;I)$ is homotopy equivalent to the free loop space $\Lambda(M;\id)$, see~\cite[Lemma~3.6]{Grove:Condition_C_for_the_energy_integral_on_certain_path_spaces_and_applications_to_the_theory_of_geodesics}. Therefore, all the connected components of $\Lambda(M;I)$ are not simply connected. If $M$ were the circle, all connected components of $\Lambda(M;I)$ would be homotopy equivalent to the circle. Obviously, in this case, there is only one  $I$-invariant geodesic: the circle $M$ itself. The next theorem, which is the main result of this paper, asserts that there are infinitely many isometry-invariant geodesics whenever the homotopy groups of $\Lambda(M;I)$ are richer than in the example of the circle.

\begin{thm}\label{t:main}
Let $(M,g)$ be a connected closed Riemannian manifold equipped with an isometry $I$. Assume that there exists a degree $d\geq 1$ and an infinite sequence of pairwise distinct connected components $C_n\subset\Lambda(M;I)$, for $n\in\N$, with non-trivial homotopy group $\pi_d(C_n)$. If $d=1$, assume further that these fundamental groups are not cyclic. Then, there exist infinitely many $I$-invariant geodesics in $C:=\cup_{n\in\N} C_n$.
\end{thm}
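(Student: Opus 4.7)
\medskip

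\noindent\textbf{Proof proposal.} The plan is to argue by contradiction using a standard Lusternik-Schnirelmann minimax, with Lemma~\ref{l:iterated_mountain_passes} as the decisive obstruction. Assume that $(M,g,I)$ admits only finitely many invariant geodesics $\gamma_1,\dots,\gamma_N$ with minimal periods $p_1,\dots,p_N$. By the discussion preceding Lemma~\ref{l:mean_index}, every critical orbit of $E$ with positive critical value has the form $\orb(\gamma_i^{mp_i+1})$ for some $i\in\{1,\dots,N\}$ and some $m\in\N$, and each such critical orbit is isolated. Distinct iteration orders $m$ yield critical orbits in distinct connected components of $\Lambda(M;I)$, so only finitely many of them fall into any given component $C_n$.

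For each $n$, fix a non-trivial class $[\phi_n]\in\pi_d(C_n)$ and consider the minimax value
\[
c_n:=\inf_{\psi\sim\phi_n}\;\max_{x\in\Delta^d} E(\psi(x))>0.
\]
The Palais-Smale condition guarantees that $c_n$ is a critical value of $E$ realized on some critical orbit $\mathcal{O}_n\subset C_n$. The standard deformation argument (applied simplex-by-simplex to a fine triangulation of $\Delta^d$, as in the proof of Lemma~\ref{l:iterated_mountain_passes_zero_mean_index}) implies that if every critical orbit at level $c_n$ in $C_n$ satisfied the conclusion of Lemma~\ref{l:iterated_mountain_passes}, then any near-optimal representative of $[\phi_n]$ could be homotoped into $\{E<c_n\}$, contradicting the definition of $c_n$. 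Therefore at least one critical orbit $\mathcal{O}_n$ must fail the conclusion of Lemma~\ref{l:iterated_mountain_passes}: it is a genuine $d$-dimensional mountain pass for the class $[\phi_n]$.

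The pigeonhole principle then extracts a fixed $i\in\{1,\dots,N\}$ and an infinite subsequence along which $\mathcal{O}_n=\orb(\gamma_i^{m_n p_i+1})$. Since the components $C_n$ are pairwise distinct and each contains at most finitely many iterates of $\gamma_i$, we have $m_n\to\infty$. Invoking Lemma~\ref{l:iterated_mountain_passes} with $d$ as fixed, we see that for $m_n$ sufficiently large the iterate $\orb(\gamma_i^{m_n p_i+1})$ is \emph{not} a $d$-dimensional mountain pass in the sense of that lemma, contradicting the previous paragraph.

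\medskip

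\noindent\textbf{The delicate point: the case $d=1$.} When $d=1$, the argument above is incomplete because the orbit-circle $\orb(\mathcal{O})$ of any critical orbit is itself a non-contractible $1$-simplex through $\mathcal{O}$, and the conclusion of Lemma~\ref{l:iterated_mountain_passes} for a $1$-simplex $u$ whose image sits in the orbit-circle is vacuously obstructed. To circumvent this, one uses the hypothesis that $\pi_1(C_n)$ is not cyclic: since by Step~1 the critical orbits of $E$ inside $C_n$ are finite in number, their orbit classes generate a proper subgroup of $\pi_1(C_n)$, and one can choose the minimax representative $[\phi_n]$ to lie outside the cyclic subgroup generated by each individual orbit class. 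With this choice the mountain-pass critical orbit produced by the minimax cannot be realized trivially by its own orbit circle, and the remainder of the argument proceeds as in the higher-dimensional case.

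\medskip

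\noindent\textbf{Expected main obstacle.} The subtlest part is the precise matching between the abstract minimax-deformation argument and the hypotheses of Lemma~\ref{l:iterated_mountain_passes}, namely the compact-domain/boundary structure $u:K\to\{E<c\}\cup V$ with $u(\partial K)\cap V=\varnothing$. One must triangulate a near-optimal representative of $[\phi_n]$ finely enough that simplexes intersecting a prescribed neighborhood of $\mathcal{O}_n$ can be treated as the map $u$ in Lemma~\ref{l:iterated_mountain_passes}, while the complementary simplexes are already in the open sublevel $\{E<c_n\}$; iterating this reduction over the critical orbits at level $c_n$ (of which there are finitely many by the contradiction hypothesis) would then yield the impossible inequality $c_n'<c_n$. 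Making this bookkeeping rigorous, and in particular choosing the minimax class in the non-cyclic $d=1$ setting so that the realizing critical orbit is automatically isolated from its own orbit circle class, is where the main technical work lies.
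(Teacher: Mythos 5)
Your overall strategy (contradiction, minimax in infinitely many components, pigeonhole to force arbitrarily high iterates, then Lemma~\ref{l:iterated_mountain_passes} to kill them) is the same as the paper's, but there is a genuine gap at the step where you pass from ``the class $[\phi_n]$ cannot be pushed below $c_n$'' to ``some critical orbit at level $c_n$ fails the conclusion of Lemma~\ref{l:iterated_mountain_passes}''. That implication requires the representative to satisfy the lemma's boundary hypothesis $u(\partial K)\cap V=\varnothing$, i.e.\ the basepoint/boundary of the representative must lie strictly below the minimax level; if it does not, the deformation scheme breaks down at the \emph{hypotheses} of the lemma, not at its conclusion (at a strict local minimum the conclusion of Lemma~\ref{l:iterated_mountain_passes} holds trivially, because then $V$ and $\{E<c\}$ are disjoint open sets and a connected domain whose boundary maps outside $V$ never enters $V$). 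So you must anchor the minimax: the paper takes the basepoint $\alpha_n$ to be a global minimum of $E$ on $C_n$ and proves $c_n>E(\alpha_n)$ --- for $d\geq2$ because a representative squeezed into a tubular neighborhood of $\orb(\alpha_n)$ would factor through $\pi_d(S^1)=0$, and for $d=1$ this is exactly where non-cyclicity enters: one chooses $h_n$ outside the \emph{single} cyclic subgroup generated by the class of the orbit circle $\orb(\alpha_n)$. Your proposal never verifies that $c_n$ exceeds the basepoint level, and your $d=1$ discussion is aimed at the wrong orbits.

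Concretely, two statements you rely on are false as written. First, ``the orbit classes of the finitely many critical orbits generate a proper subgroup of $\pi_1(C_n)$'' does not follow from non-cyclicity (finitely many elements can generate a non-cyclic group), and even the weaker goal of choosing a class outside the union of the cyclic subgroups generated by each individual orbit class can be impossible (the Klein four-group is the union of three proper cyclic subgroups); fortunately this strong avoidance is unnecessary, since the orbit circles of the level-$c_n$ critical orbits cause no obstruction --- a loop contained in such an orbit circle violates the boundary hypothesis of Lemma~\ref{l:iterated_mountain_passes}, so only the cyclic subgroup coming from the minimizing orbit $\orb(\alpha_n)$ needs to be avoided. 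Second, ``distinct iteration orders yield critical orbits in distinct connected components'', hence ``each $C_n$ contains at most finitely many iterates of $\gamma_i$'', is false in general (all iterates of a contractible closed geodesic lie in one component); what you actually need, and what is true, is that each fixed critical orbit lies in exactly one component, so pairwise distinctness of the $C_n$ forces $m_n\to\infty$ along your subsequence (and, in the paper's formulation, forces every critical orbit in $C_n$ to be a high iterate once $n$ is large). Finally, Lemma~\ref{l:iterated_mountain_passes} should be applied to the whole disc $B^d$, whose boundary maps to $\alpha_n\notin V$, rather than simplex-by-simplex: an individual simplex may lie entirely inside $u^{-1}(V)$ and then fails the boundary condition. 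With these repairs your argument collapses onto the paper's proof.
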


\begin{proof}
Since we are looking for infinitely many $I$-invariant geodesics in $C$, we can assume that the critical circles of $E|_C$ are isolated. In particular, all $I$-invariant geodesics contained in $C$ must be periodic curves. We choose as base-point of $C_n$ an $I$-invariant geodesic $\alpha_n$ whose critical circle is a global minimum of $E$ in the connected component. In general we cannot expect the family of the $\alpha_n$'s to give infinitely many $I$-invariant geodesics: for instance, in the worst case scenario, $\alpha_1$ may be an $I$-invariant geodesic that is periodic with minimal period $q\geq1$, and  each $\alpha_n$ may be of the form $\alpha_1^{nq+1}$.

We will prove the theorem by contradiction: we assume that $C$ contains only the $I$-invariant geodesics $\gamma_1,...,\gamma_r$ (together with infinitely many of their ``iterates''). For each of these $\gamma_i$'s, we denote by $p_i\geq1$ its minimal period, and by $\overline m_i$ the integer such that for all $m\geq\overline m_i$ the assertion of Lemma~\ref{l:iterated_mountain_passes} holds. If $n$ is large enough, say $n\geq\overline n$, an iterated $I$-invariant geodesic $\gamma_i^{mp_i+1}$ belongs to $C_n$ only if $m\geq\overline m_i$.

We fix $n\geq\overline n$ and a non-trivial homotopy class $h_n\in\pi_d(C_n,\alpha_n)$. If $d=1$, we assumed that $\pi_d(C_n,\alpha_n)$ is not cyclic, and therefore we can choose $h_n$ that is not represented by a multiple of the critical circle $\orb(\alpha_m)$. The representatives of $h_n$ are maps of the form $u:(B^d,\partial B^d)\to(C_n,\alpha_n)$. Notice that every such map cannot have image entirely contained in a small tubular neighborhood $N\subset C_n$ of $\orb(\alpha_n)$. Otherwise, the map $u$ could be deformed inside  $\orb(\alpha_n)$, which would force $d=1$ and $h_n$ to be represented by a multiple of the critical circle $\orb(\alpha_n)$. Since $\orb(\alpha_n)$ is a strict local minimum of the energy $E$, we have
\begin{align}\label{e:minimax}
c:=\inf_{[u]=h_n} \max_{x\in B^d} E(u(x))>E(\alpha_n). 
\end{align}
By Morse theory, $c$ is a critical value of $E|_{C_n}$. The assumption made in the previous paragraph implies that every critical circle in $C_n$ at level $c$ is of the form $\gamma_i^{m_ip_i+1}$, where $\gamma_i\in\{\gamma_1,...,\gamma_r\}$ and $m_i\geq \overline m_i$. We denote by $V_i\subset C_n$ a sufficiently small neighborhood of the critical circle $\orb(\gamma_i^{m_ip_i+1})$ that does not contain other critical circles of $E$, and by $V$ the (disjoint) union of the these neighborhoods for all the  critical circles in $C_n$ at level $c$. By Morse theory, we can find a representative $u:(B^d,\partial B^d)\to(C_n,\alpha_n)$ of $h_n$ such that $u(B^d)\subset \{E<c\}\cup V$. By Lemma~\ref{l:iterated_mountain_passes}, there exists a homotopy $u_s:(B^d,\partial B^d)\to(C_n,\alpha_n)$ such that $u_0=u$ and $u_1(B^d)\subset\{E<c\}$. This contradicts the definition of $c$ in~\eqref{e:minimax}.
\end{proof}

\begin{proof}[Proof of Theorem~\ref{t:pi_1_Z}]
Let $\ev:\Lambda(M;\id)\to M$ be the evaluation map $\ev(\gamma)=\gamma(0)$. We fix a generator $\theta$ of the fundamental group $\pi_1(M)\simeq\Z$, and  we denote by $D_n\subset\Lambda(M;\id)$ the connected component of the free loop space containing the $n$-fold iteration of the curve $\theta$. Since the closed manifold $M$ is not a circle, it is not homotopy equivalent to a circle. In particular, by Whitehead Theorem, $M$ must have a non-trivial homotopy group in some degree $d\geq 2$. We fix $d$ to be a minimal such degree. Bangert and Hingston proved in \cite{Bangert_Hingston:Closed_geodesics_on_manifolds_with_infinite_Abelian_fundamental_group} that there exists $k\in\N$ and, for all $n\in\N$, a non-trivial homotopy class $h_{n}\in\pi_{d-1}(D_{kn})$ such that $\ev_*(h_n)=0$. As we have already remarked at the beginning of this section, all connected components $D_n$ are not simply connected, and the evaluation map induces a surjective homomorphism $\ev_*:\pi_1(D_n)\to\pi_1(M)$ for all $n>0$. Therefore, if the above degree $d$ is equal to 2, Bangert and Hingston's result implies that, for all $n>0$, the fundamental group $\pi_{d-1}(D_{kn})$ is not cyclic.

Now, let $I$ be an isometry that is homotopic to the identity. We already recalled  that there exists a homotopy equivalence $\iota:\Lambda(M;\id)\to\Lambda(M;I)$. We denote by $C_n\subset\Lambda(M;I)$ the connected component containing $\iota(D_n)$. Notice that these connected components are pairwise distinct, and they have the same homotopy groups as the corresponding $D_n$'s. Therefore, the sequence $C_{kn}$, for $n\in\N$, satisfies the assumptions of Theorem~\ref{t:main}, which implies that the union $C:=\cup_{n\in\N} C_{kn}$ contains infinitely many $I$-invariant geodesics.
\end{proof}

\bibliography{_biblio}
\bibliographystyle{amsalpha}

\end{document}